\definecolor{darkblue}{rgb}{0.0,0.0,0.3}
\theoremstyle{plain}
\newtheorem{thm}{Theorem}[section]
\newtheorem{conj}[thm]{Conjecture}
\newtheorem{cor}[thm]{Corollary}
\newtheorem{prop}[thm]{Proposition}
\newtheorem{lem}[thm]{Lemma}
\theoremstyle{definition}
\newtheorem{defn}[thm]{Definition}
\newtheorem{example}[thm]{Example}
\newtheorem{rem}[thm]{Remark}
\newtheorem{question}[thm]{Question}
\numberwithin{equation}{section}
\newcommand{\bC}{{\mathbb{C}}}
\newcommand{\bR}{{\mathbb{R}}}
\newcommand{\B}{\mathcal{B}}
\newcommand{\G}{\mathcal{G}}
\newcommand{\J}{\mathcal{J}}
\newcommand{\M}{\mathcal{M}}
\renewcommand{\O}{\mathcal{O}}
\newcommand{\rC}{\mathrm{C}}
\newcommand{\ep}{\varepsilon}
\newcommand{\ca}{\mathrm{C}^*}
\newcommand{\diag}{\operatorname{diag}}
\newcommand{\Int}{\operatorname{int}}
\newcommand{\OP}{\operatorname{OP}}
\newcommand{\supp}{\operatorname{supp}}
\newcommand{\ip}[1]{\langle #1 \rangle}
\newcommand{\bip}[1]{\big\langle #1 \big\rangle}
\newcommand{\ol}{\overline}
\newcommand{\qand}{\quad\text{and}\quad}
\newcommand{\qfor}{\quad\text{for}\quad}
\newcommand{\qforal}{\quad\text{for all}\quad}
\newcommand{\qif}{\quad\text{if}\quad}
\newcommand{\AND}{\ \text{and}\ }
\begin{document}
\title[Choquet order and hyperrigidity for function systems]{Choquet order and hyperrigidity\\ for function systems}

\author[K.R. Davidson]{Kenneth R. Davidson}
\address{Department of Pure Mathematics\\ University of Waterloo\\Waterloo, ON, N2L 3G1, Canada}
\email{krdavids@uwaterloo.ca}

\author[M. Kennedy]{Matthew Kennedy}
\address{Department of Pure Mathematics\\ University of Waterloo\\Waterloo, ON, N2L 3G1, Canada}
\email{matt.kennedy@uwaterloo.ca}

\begin{abstract}
We establish a dilation-theoretic characterization of the Choquet order on the space of measures on a compact convex set using ideas from the theory of operator algebras. This yields an extension of Cartier's dilation theorem to the non-separable case, as well as a non-separable version of \u Sa\u skin's theorem from approximation theory. We show that a slight variant of this order characterizes the representations of a commutative C*-algebras that have the unique extension property relative to a set of generators. This reduces the commutative case of Arveson's hyperrigidity conjecture to the question of whether measures that are maximal with respect to the classical Choquet order are also maximal with respect to this new order. An example shows that these orders are not the same in general.
\end{abstract}

\subjclass[2010]{Primary 46A55, 46L05; Secondary 41A36, 47A20, 47A58, 47L25}
\keywords{Choquet theory, hyperrigidity, Korovkin sets, completely positive maps}
\thanks{First author supported by NSERC Grant Number 3488-2013.}
\thanks{Second author supported by NSERC Grant Number 418585.}
\maketitle

\section{Introduction} \label{sec:introduction}

Choquet theory is an important subject within convex analysis that has applications to many different fields in mathematics. The most well-known result in Choquet theory is Choquet's theorem, which asserts that for a metrizable compact convex set $K$ and a point $x \in K$, there is always a probability measure on $K$ with barycenter $x$ that is supported on the set $\partial K$ of extreme points of $K$, in the sense that $\mu(\partial K) = 1$. 

If $K$ is non-metrizable, then $\partial K$ is not necessarily Borel, in which case it no longer makes sense to say that $\mu$ is supported on $\partial K$. Nevertheless, Bishop and de Leeuw were able to extend Choquet's theorem to the non-metrizable setting. They considered a partial order on the set of measures on $K$ with the property that a maximal measure $\mu$ has essentially all of the desirable properties of a measure supported on $\partial K$. In particular, $\mu$ is almost supported on $\partial K$ in a suitable sense. Bishop and de Leeuw's theorem asserts that for a point $x \in K$, there is always a maximal probability measure $\mu$ on $K$ with barycenter $x$.

This idea that a measure-theoretic condition can be translated into an order-theoretic condition has become a central component of the modern-day approach to Choquet theory. The Choquet order on the set of measures on a compact convex set, which refines the partial order introduced by Bishop and de Leeuw, plays a key role in the theory.

For measures $\mu$ and $\nu$ on a compact convex set $K$, $\mu$ is said to be dominated by $\nu$ in the Choquet order, written $\mu \prec_c \nu$, if  $\mu(f) \leq \nu(f)$ for all continuous convex functions $f \in \rC(K)$. 

In this paper, we introduce two new partial orders on the space of measures based on ideas from the theory of operator algebras and dilation theory. The first, which we call the strong dilation order, turns out to be equivalent to the Choquet order. This equivalence leads to a new proof of Cartier's theorem, which asserts that if $K$ is metrizable and $\mu$ and $\nu$ are measures on $K$ with $\mu$ dominated by $\nu$ in the Choquet order, then there is a family of probability measures $(\lambda_x)_{x \in K}$ with the property that
\[ \int f \,d\nu = \int_K \lambda_x(f) \,d\mu \qforal f \in \rC(K) . \]

While existing proofs of Cartier's theorem rely on the theory of disintegration of measures, which accounts for the requirement that $K$ be metrizable, our proof instead uses the equivalence of the Choquet order and the strong dilation order, in addition to a lifting theorem of Maharam. As a consequence, our proof does not require the metrizability of $K$.

A second application of the equivalence of the Choquet order and the strong dilation order is to approximation theory. A classical theorem of Korovkin shows that if $\phi_n$ is a sequence of positive maps from $\rC[a,b]$ to itself satisfying $\lim_{n} \phi_n(x^j) = x^j$ uniformly for $j=0,1,2$, then $\lim_{n} \phi_n(f) = f$ uniformly for each $f \in \rC[a,b]$.

A result of \u Sa\u skin provides a major generalization of Korovkin's theorem. For a a compact metrizable space $X$, \u Sa\u skin's result provides necessary and sufficient conditions for a set of continuous functions on $\rC(X)$ to serve as a set of test functions for a sequence $\phi_n$ of positive maps from $\rC(X)$ into itself, in the sense that if $\phi_n(g) \to g$ uniformly for each test function, then $\phi_n$ converges pointwise to the identity on all of $\rC(X)$, meaning that $\phi_n(f) \to f$ uniformly for each $f\in \rC(X)$.

We give a new proof of \u Sa\u skin's theorem using the equivalence of the Choquet order with the strong dilation order. Once again, our proof does not require the metrizability of $X$, provided that sequences are replaced by nets.

The second partial order on measures, which we call the dilation order, is slightly weaker than the strong dilation order, in the sense that domination in the strong dilation order implies domination in the dilation order. As a consequence, measures that are maximal with respect to the dilation order are also maximal with respect to the Choquet order. 

We show that the dilation order characterizes representations of unital commutative C*-algebras with the unique extension property relative to a generating function system. In particular, a measure $\mu$ on a compact convex set $K$ is maximal with respect to the dilation order if and only if the corresponding representation $\pi_\mu$ of $\rC(K)$ on $L^2(\mu)$ has the unique extension property relative to the space $A(K)$ of continuous affine functions on $K$, meaning that the restriction $\pi_\mu|_{A(K)}$ has a unique extension to a completely positive map on $\rC(K)$. 

This has applications to an operator-valued extension of the theorem of Korovkin and \u Sa\u skin conjectured by Arveson. Arveson says that a unital subspace $A$ of a C*-algebra $C$ is hyperrigid relative to $C$ if for every representation $\pi$ of a separable C*-algebra $C$ on a Hilbert space $H$, and every net $\phi_n$ of completely positive maps from $C$ to $\B(H)$ such that $\lim_{n} \phi_n(a) = \pi(a)$ for all $a \in A$, then necessarily $\lim_{n} \phi_n(c) = c$ for all $c \in C$. 

Arveson showed that $A$ is hyperrigid relative to $C$ if and only if every representation of $C$ has the unique extension property relative to $A$. Arveson's hyperrigidity conjecture, which has now become an important open problem, is the assertion that $A$ is hyperrigid in $C$ if and only if every irreducible representation of $C$ has the unique extension property relative to $A$.

In the commutative setting, Arveson's hyperrigidity conjecture reduces to the question of the hyperrigidity of the space $A(K)$ of continuous affine functions on a compact convex set $K$ relative to the C*-algebra $\rC(\ol{\partial K})$ of continuous functions on the closure of the extreme points of $K$.

Our results imply $A(K)$ is hyperrigid relative to $\rC(K)$ if and only if the set $\partial K$ of extreme points of $K$ is closed and every measure supported on $\partial K$ is maximal with respect to the dilation order. In particular, a resolution of Arveson's conjecture in the commutative setting reduces to the question of whether maximality in the Choquet order implies maximality in the dilation order.

Finally, we give an example showing that the Choquet order and the dilation order do not coincide in general.

\section{Preliminaries} \label{sec:preliminaries}

Since this work makes considerable use of both Choquet theory and the theory of operator algebras, we will be somewhat generous in providing background material in both areas for the convenience of our readers.

\subsection{Commutative C*-algebras} \label{sec:comm-c-star-alg}

Let $X$ be a compact Hausdorff space and let $\rC(X)$ denote the C*-algebra of continuous functions on $X$. A linear functional $\alpha : \rC(X) \to \bC$ is said to be unital if $\alpha(1) = 1$ and positive if $\alpha(f) \geq 0$ for every non-negative function $f \in \rC(X)$. If $\alpha$ is both unital and positive, then it is said to be a {\em state}.

Let $M^+(X)$ denote the space of positive regular Borel measures on $X$, and let $P(X)$ denote the space of regular Borel probability measures on $X$. By the Riesz-Markov-Kakutani repre\-sen\-ta\-tion theorem, positive linear functionals on $\rC(X)$ correspond to measures in $M^+(X)$, and in particular, states on $\rC(X)$ correspond to probability measures in $P(X)$.

For $\mu \in M^+(X)$, the corresponding positive linear functional on $\rC(X)$ is defined by
\[
\mu(f) = \int_X f\, d\mu  \qfor f \in \rC(X).
\]

\begin{defn} \label{D:representation}
A {\em repre\-sen\-ta\-tion of $\mu$} is a tuple $(\pi, H, \xi)$ consisting of a $*$-repre\-sen\-ta\-tion $\pi : \rC(X) \to \B(H)$ on a Hilbert space $H$ and a distinguished vector $\xi \in H$ such that
\[
\mu(f) = \ip{\pi(f)\xi, \xi} \qforal f \in \rC(X).
\]
\end{defn} 

We will write $(\pi_\mu, L^2(\mu), 1_\mu)$ for the repre\-sen\-ta\-tion obtained from the Gelfand-Naimark-Segal (GNS) construction for the positive functional $\mu$, where $L^2(\mu) = L^2(X,\mu)$, $1_\mu$ denotes the constant function $1$ considered as an element of $L^2(\mu)$, and $\pi_\mu : \rC(X) \to \B(L^2(\mu))$ is defined by
\[
\pi_\mu(f) h = fh \qfor f \in \rC(X),\ h \in L^2(\mu).
\]
The GNS repre\-sen\-ta\-tion of $\mu$ is minimal, in the sense that if $(\pi, H, \xi)$ is another repre\-sen\-ta\-tion of $\mu$, then the restriction of $\pi$ to the cyclic invariant subspace for $\pi$ generated by $\xi$ is unitarily equivalent to $\pi_\mu$ via a unitary that maps $\xi$ to $1_\mu$.

It is a standard fact from the theory of representations of C*-algebras that every $*$-representation $\pi : \rC(X) \to \B(H)$ can be written as a direct sum of cyclic $*$-representations. Furthermore, every cyclic $*$-representation is unitarily equivalent to the GNS representation $\pi_\mu$ for some measure $\mu \in M^+(X)$. 

For a compact subset $C \subset X$, we will say that a $*$-representation $\pi$  of $\rC(X)$ is {\em supported on $C$} if there are measures $(\mu_i)_{i \in I}$ in $M^+(X)$ such that $\pi$ is unitarily equivalent to the direct sum $\oplus_{i \in I} \pi_{\mu_i}$, and each $\mu_i$ is supported on $C$.

\subsection{Function systems} \label{sec:function-systems}

The notion of a function system was introduced by Kadison in \cite{Kad1951}. An {\em abstract function system} $A$ is an ordered normed vector space that is positively generated, i.e. $A = A^+ - A^+$, and has a distinguished archimedean order unit $1_A$ such that the norm on $A$ is determined via the formula
\[
\|a\| = \inf \{ \lambda > 0 : -\lambda 1_A \leq a \leq \lambda 1_A \} \qfor a \in A.
\]
We will consider function systems over the complex numbers. Although the literature often considers function systems over the real numbers, results in the complex case are readily derived from the real case.

A linear functional $\alpha : A \to \bC$ is said to be unital if $\alpha(1_A) = 1$ and positive if $\alpha(A^+) \subset \bR^+$. If $\alpha$ is both unital and positive, then it is said to be a state. The {\em state space} $S(A)$ of $A$ is the compact convex space of states on $A$ equipped with the weak-$*$~topology.

If $B$ is another function system, then a map $\phi : A \to B$ is said to be unital if $\phi(1_A) = 1_B$, and is said to be positive if $\phi(A^+) \subset B^+$. If $\phi$ is bijective, then it is said to be an {\em order isomorphism} if it is unital and both $\phi$ and $\phi^{-1}$ are positive.

A {\em concrete function system} is a unital self-adjoint subspace of a unital commutative C*-algebra. Observe that a concrete function system, considered as a vector space over the real numbers, is an abstract function system in the above sense. By Kadison's repre\-sen\-ta\-tion theorem, every abstract function system is order isomorphic to a canonical concrete function system. We collect this result, as well as several closely related results on function systems in the next theorem. For details we refer the reader to the book of Alfsen and Shultz \cite{AS2001}.

\begin{thm}[Kadison] \label{thm:function-systems}
Let $A$ be a function system with state space $K := S(A)$. Then $A$ is order isomorphic to a dense subspace of the space $A(K)$ of continuous affine function on $K$ via the map $\iota: A \to A(K) : a \to \hat{a}$, where
\[
\hat{a}(\alpha) = \alpha(a) \qfor a \in A,\ \alpha \in K.
\]
Moreover, if $A$ is a concrete function system that generates a commutative C*-algebra $\rC(X)$, then there is a $*$-homomorphism $q : \rC(K) \to \rC(X)$ such that $q \circ \iota$ is the identity on $A$.
\end{thm}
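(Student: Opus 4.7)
The plan is to verify in stages that the evaluation map $\iota: A \to A(K)$ is a unital order embedding with dense image, and then to construct the $*$-homomorphism $q$ in the concrete case by pulling back along a canonical continuous map $X \to K$.

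First I would check the basic structural properties of $\iota$. Linearity in $a$ is immediate, and each $\hat{a}$ lies in $A(K)$: it is affine in $\alpha$ because evaluation at a fixed $a$ is affine, and it is continuous in $\alpha$ by the very definition of the weak-$*$ topology on $K$. Unitality follows from $\hat{1_A}(\alpha) = \alpha(1_A) = 1$, and positivity of $\iota$ follows from the positivity of each state $\alpha \in K$.

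The central step is the order-embedding assertion: if $\hat{a} \geq 0$ on $K$, then $a \geq 0$ in $A$. This is where the archimedean order-unit property enters. Suppose instead $a \not\geq 0$, so $-a$ is not in the closed cone $A^+$. Applying a geometric Hahn-Banach separation, one obtains a continuous real-linear functional on $A$ that is nonnegative on $A^+$ and strictly negative at $a$. Using that $1_A$ is an interior point of $A^+$ with respect to the order-unit norm, one can rescale this functional to a genuine state $\alpha \in K$ with $\alpha(a) < 0$, contradicting $\hat{a}(\alpha) \geq 0$. The same separation argument shows that the states of $A$ separate its points, so $\iota$ is injective, and a parallel argument yields that $\iota$ is isometric: $\|\hat{a}\|_\infty = \inf\{\lambda > 0 : -\lambda 1_A \leq a \leq \lambda 1_A\} = \|a\|$.

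For density of $\iota(A)$ in $A(K)$, I would argue by a second Hahn-Banach separation. If $\iota(A)$ were not dense in $A(K)$, one obtains a nonzero bounded linear functional $\phi$ on $A(K)$ vanishing on $\iota(A)$. Writing $\phi = \mu - \nu$ with $\mu,\nu \in M^+(K)$ via the Jordan decomposition (identifying $A(K)^*$ with regular Borel measures on $K$ modulo those annihilating $A(K)$), the condition $\phi(\hat{a}) = 0$ for all $a \in A$ translates into the equality of the barycenters of $\mu$ and $\nu$ in $K$, provided both measures have the same mass. A minor normalization at $\hat{1_A}$ gives equal masses, and since a measure on $K$ in $A(K)^*$ is determined by its barycenter, one concludes $\phi = 0$, a contradiction. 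This density step is the main technical obstacle, as it rests on the existence and uniqueness of barycenters in $K$ and the identification of the dual of $A(K)$ in terms of such barycenter evaluations.

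Finally, for the concrete case $A \subset \rC(X)$, each point $x \in X$ produces a multiplicative state $\delta_x : \rC(X) \to \bC$, whose restriction to $A$ defines an element $\Psi(x) := \delta_x|_A \in K$. The map $\Psi : X \to K$ is continuous because weak-$*$ convergence $\Psi(x_n) \to \Psi(x)$ reduces to $a(x_n) \to a(x)$ for every $a \in A$, which holds by continuity of $a$. Pulling back along $\Psi$ yields a unital $*$-homomorphism $q : \rC(K) \to \rC(X)$ defined by $q(f)(x) = f(\Psi(x))$, and the computation
\[
q(\hat{a})(x) = \hat{a}(\Psi(x)) = \Psi(x)(a) = \delta_x(a) = a(x)
\]
shows that $q \circ \iota$ is the identity on $A$, completing the proof.
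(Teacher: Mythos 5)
The paper offers no proof of this statement at all --- it is Kadison's classical representation theorem, and the authors simply cite Alfsen--Shultz for the details --- so your proposal can only be measured against the standard argument, which it follows faithfully and, in outline, correctly. The routine parts (well-definedness, unitality, positivity of $\iota$; the separation of a point $a \notin A^+$ from the closed cone $A^+$ to produce a state with $\alpha(a)<0$, where the archimedean axiom is consumed in showing $A^+$ is norm-closed and the normalization uses $\phi(1_A)>0$; the pullback construction $q(f) = f\circ \Psi$ along $\Psi(x)=\delta_x|_A$ in the concrete case) are all as they should be, modulo one slip: $a \not\geq 0$ means $a \notin A^+$, not $-a \notin A^+$, and it is $a$ you separate from the cone, as the rest of your sentence correctly indicates. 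The one point requiring genuine care is the density step. You reduce it to the barycenter formula $\mu(f)=f(r(\mu))$ for \emph{all} $f\in A(K)$, but in the usual textbook development (Alfsen, Cor.~I.1.5 and Prop.~I.2.2) that formula is itself deduced from the density in $A(K)$ of the weak-$*$ continuous linear functionals plus constants on $A^*$ --- which, for $K=S(A)$, is exactly the set $\iota(A)$ whose density you are trying to establish. To avoid circularity you must either prove the barycenter formula independently (push $\mu$ forward under the affine homeomorphism $x\mapsto(x,f(x))$ of $K$ onto its graph in $A^*\times\bR$; the barycenter of the pushforward lies on that compact convex graph and has coordinates $(r(\mu),\mu(f))$, whence $\mu(f)=f(r(\mu))$), or prove density directly by separating, for real $f\in A(K)$ and $\ep>0$, the disjoint compact convex sets $\{(x,t): x\in K,\ t\le f(x)\}$ and $\{(x,t): x\in K,\ t\ge f(x)+\ep\}$ in $A^*\times\bR$. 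With either repair, and the routine reduction of the complex case to the real self-adjoint part before invoking real Hahn--Banach and the Jordan decomposition, your argument is complete.
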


Observe that if $A$ is complete, then Theorem \ref{thm:function-systems} implies in particular that $A$ is order isomorphic to $A(K)$. We will  work with function systems that are complete in this paper.

Theorem \ref{thm:function-systems} largely reduces the study of abstract function systems to the study of the concrete function systems of continuous affine functions on compact convex sets.

\subsection{Choquet boundary} \label{sec:choquet-boundary}

For an overview of Choquet theory, we refer the reader to the books of Alfsen \cite{Alf} and Phelps \cite{Phelps}.

Let $A$ be a concrete function system that generates a commutative C*-algebra $\rC(X)$. Let $K = S(A)$ denote the state space of $A$, and let $\iota : A \to A(K)$ and $q : \rC(K) \to \rC(X)$ be as in Theorem \ref{thm:function-systems}. Then letting $q^* : \rC(X)^* \to \rC(K)^*$ denote the adjoint of $q$ and identifying points in $X$ and $K$ with the corresponding point evaluations, $q^*$ maps $X$ into $K$. The {\em Choquet boundary} $\partial_A X$ of $A$ is $\partial_A X = (q^*)^{-1}(\partial K)$ where $\partial K$ denotes the set of extreme points of $K$. Note that $q^*(\partial_A X) = \partial K$.

In particular, observe that the Choquet boundary $\partial_{A(K)} K$ of $A(K)$ is precisely the set $\partial K$ of extreme points of $K$.

\subsection{Choquet order}

Let $K$ be a compact convex subset of a locally convex vector space and let $\mu \in M^+(K)$ be a positive measure. If $K$ is metrizable, then the set $\partial K$ of extreme points of $K$ is a $G_\delta$ set. In this case, we will say that $\mu$ is {\em supported} on $\partial K$ if $\mu(K \setminus \partial K) =0$.

If $K$ is non-metrizable, then Bishop-de Leeuw \cite{BdL} showed that $\partial K$ is not necessarily even Borel. In general, we will say that $\mu$ is {\em pseudo-supported} on $\partial K$ if $\mu(X) = 0$ for every Baire subset $X \subset K$ with $X \cap \partial K = \emptyset$. Recall that a subset of $K$ is a Baire set if it belongs to the $\sigma$-algebra generated by all compact $G_\delta$ subsets of $K$. If $K$ is metrizable, then every closed subset is a $G_\delta$. Thus, in this case, $\mu$ is pseudo-supported on $\partial K$ if and only if it is supported on $\partial K$.  In the general case, one can at least assert that a measure which is pseudo-supported on $\partial K$ is supported on $\ol{\partial K}$.

\begin{defn}[Choquet order]
Let $K$ be a compact convex subset of a locally convex vector space. The {\em Choquet order} ``$\prec_c$'' on $M^+(K)$ is defined for $\mu,\nu \in M^+(K)$ by $\mu \prec_c \nu$ if $\mu(f) \leq \nu(f)$ for every continuous convex function $f \in \rC(K)$.
\end{defn}

\begin{defn}[Boundary measure]
Let $K$ be a compact convex subset of a locally convex vector space. A measure in $M^+(K)$ is said to be a {\em boundary measure} if it is maximal in the Choquet order.
\end{defn}

The next result combines \cite{Alf}*{Prop.I.4.5, Cor.I.4.12}.

\begin{thm}[Mokobodzki, Bishop-de Leeuw]
Let $K$ be a compact convex subset of a locally convex vector space. Then every boundary measure in $M^+(K)$ is pseudo-supported on $\partial K$. In particular if $\partial K$ is closed, then every boundary measure is supported on $\partial K$.  If $K$ is metrizable, then conversely, every measure in $M^+(K)$ that is supported on $\partial K$ is a boundary measure.
\end{thm}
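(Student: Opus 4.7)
The plan is to combine Mokobodzki's upper-envelope characterization of Choquet maximality with a classical Bauer-type argument for the converse. For $f \in \rC(K)$, define the concave upper envelope
\[
\hat f(x) = \inf\{a(x) : a \in A(K),\ a \geq f\},
\]
which is upper semicontinuous and dominates $f$. By Hahn-Banach duality one has $\hat f(x) = \sup\{\int f \, d\lambda : \lambda \in P(K),\ \mathrm{bar}(\lambda) = x\}$. A standard duality argument also yields Mokobodzki's criterion: $\mu \in M^+(K)$ is a boundary measure if and only if $\int \hat f \, d\mu = \int f \, d\mu$ for every continuous convex $f \in \rC(K)$.

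To prove pseudo-support, fix a boundary measure $\mu$ and a compact $G_\delta$ set $F \subseteq K \setminus \partial K$. For each $x \in F$, non-extremality gives $x = (a+b)/2$ with $a \neq b$ in $K$, and separating $a$ from $b$ by a continuous affine functional $\ell$ produces the convex function $g_x = \ell^2 \in \rC(K)$ satisfying $\hat g_x(x) \geq (\ell(a)^2 + \ell(b)^2)/2 > \ell(x)^2 = g_x(x)$ by strict convexity of $t \mapsto t^2$. By compactness of $F$ and a partition-of-unity-style summation, finitely many such $g_x$'s can be combined into a single convex $g \in \rC(K)$ with $\hat g - g \geq \varepsilon$ on $F$; Mokobodzki's criterion then forces $\mu(F) = 0$. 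Regularity of Baire measures extends this to $\mu(X) = 0$ for every Baire set $X$ disjoint from $\partial K$. For the closed case, if $\partial K$ is closed, then any compact $C \subseteq K \setminus \partial K$ can, by normality of $K$ and Urysohn's lemma, be enclosed in a compact $G_\delta$ set contained in $K \setminus \partial K$; hence $\mu(C) = 0$, and inner regularity of $\mu$ on the open set $K \setminus \partial K$ yields $\mu(K \setminus \partial K) = 0$.

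For the metrizable converse, assume $\mu$ is supported on $\partial K$. At any extreme point $x$, the only probability measure on $K$ with barycenter $x$ is $\delta_x$, so the duality formula above gives $\hat f(x) = f(x)$ for every convex $f \in \rC(K)$. Since $\mu(\partial K) = \mu(K)$, this yields $\int \hat f \, d\mu = \int f \, d\mu$ for all convex $f$, and Mokobodzki's criterion concludes that $\mu$ is maximal. (Alternatively, one could invoke the extension of Cartier's theorem stated earlier, together with the fact that $\lambda_x = \delta_x$ $\mu$-a.e., to identify any $\nu$ with $\mu \prec_c \nu$ as $\mu$ itself.)

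I expect the main obstacle to be the pseudo-support statement: passing from the pointwise strict inequalities $\hat g_x(x) > g_x(x)$ to a single convex $g \in \rC(K)$ with a uniform gap on the compact $G_\delta$ set $F$. In the metrizable case this step is trivialized by the existence of a strictly convex continuous function on $K$ (for instance $\sum 2^{-n} \ell_n^2$ for a countable dense sequence $\ell_n \in A(K)$), whereas in the general non-metrizable setting the compactness/summation argument is the technical heart of the Mokobodzki-Bishop-de Leeuw theorem.
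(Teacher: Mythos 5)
The paper offers no proof of this statement; it is quoted directly from Alfsen (Prop.~I.4.5 and Cor.~I.4.12) as background, so there is no internal argument to compare yours against. Judged on its own terms, your outline correctly identifies the two classical ingredients, and the parts you actually execute are sound: the reduction of the closed-boundary case to compact $G_\delta$ sets via Urysohn's lemma and inner regularity works, and the metrizable converse is correct, since $\hat f = f$ on $\partial K$ for every convex continuous $f$, $\hat f$ is upper semicontinuous and hence Borel, and only the elementary direction of Mokobodzki's criterion (envelope equality implies maximality) is needed there. In the metrizable case the pseudo-support statement also goes through as you indicate, via a single strictly convex continuous function $g$ with $\{\hat g > g\} = K\setminus\partial K$ and $\mu(\hat g - g)=0$.

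The genuine gap is the step you flag as the ``technical heart,'' and it is not merely technical: the claim that compactness of $F$ plus a partition-of-unity-style summation produces finitely many $g_{x_i}$ with $\hat g - g \ge \varepsilon$ on $F$ fails. Since $\hat g_x$ is an infimum of continuous affine functions, $\hat g_x - g_x$ is only upper semicontinuous, so $\{\hat g_x - g_x > 0\}$ is an $F_\sigma$ rather than an open set, and the pointwise inequality $\hat g_x(x) > g_x(x)$ provides no neighbourhood of $x$ on which $\hat g_x - g_x$ is bounded below. (When $\partial K$ is dense, as for the Poulsen simplex, $\{\hat g > g\}$ has empty interior for every convex continuous $g$, so no open cover of $F$ by such sets exists.) Hence compactness yields no finite subcover, and one cannot even conclude that $F$ lies in countably many sets $\{\hat g_n > g_n\}$, which is what a weighted sum $\sum 2^{-n}g_n$ would require. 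Overcoming exactly this obstruction is the content of the Bishop--de Leeuw theorem in the non-metrizable setting, and the classical proofs take a genuinely different route (exploiting Mokobodzki's identity $\mu(\hat f) = \mu(f)$ for \emph{all} $f \in \rC(K)$, not only convex $f$, applied to a function realizing $F$ as a level set, together with an exhaustion argument) rather than a covering argument. Note also that the deep implication ``$\mu$ maximal $\Rightarrow \mu(\hat f) = \mu(f)$'' is asserted via ``a standard duality argument'' but is itself Mokobodzki's theorem; only the converse implication is routine.
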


For a point $x \in K$, let $\delta_x$ denote the corresponding point mass. The set $P_x(K) := \{\mu \in P(K) \mid \delta_x \prec_c \mu\}$ is precisely the set of probability measures on $K$ that represent $x$, in the sense that for $\mu \in P_x(K)$, $\mu(x) = a(x)$ for all $a \in A(K)$.

\subsection{Cartier's theorem} \label{sec:cartier}

In classical Choquet theory, there is a notion of {\em dilation of measures} that, at first glance, appears to have little in common with the notion of dilation arising in the theory of completely positive linear maps on C*-algebras. Cartier's theorem characterizes the Choquet order for metrizable compact convex sets in terms of dilation of measures.

\begin{thm}[Cartier] \label{thm:cartier}
Let $K$ be a metrizable compact convex subset of a locally convex vector space. Let $\mu,\nu \in M^+(K)$ satisfy $\mu \prec_c \nu$. Then $\mu$ is dilated by $\nu$, meaning there is a family $\{\lambda_x\}_{x \in K} \subset P(K)$ of probability measures such that
\begin{enumerate}
\item $\lambda_x \in P_x(K)$ for $\mu$-a.e. $x \in K$,
\item $f \to \lambda_x(f)$ is $\mu$-measurable for all $f \in \rC(K)$, and
\item $\int_K f\, d\nu = \int_K \lambda_x(f)\, d\mu$ for all $f \in \rC(K)$.
\end{enumerate}
\end{thm}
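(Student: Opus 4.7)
The plan is to invoke Corollary \ref{C:cp-version-of-dilation-order} to obtain an operator-algebraic dilation of $\mu$ inside $\nu$, and then exploit the separability of $\rC(K)$ in the metrizable setting to realize that dilation as a measurable family of probability measures.

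Since $\mu \prec_c \nu$, Theorem \ref{thm:equivalence-choquet-dilation-order} yields $\mu \prec_d \nu$, and Corollary \ref{C:cp-version-of-dilation-order} then produces a unital positive map $\Phi : \rC(K) \to L^\infty(\mu)$ such that $\Phi(a) = a$ for all $a \in A(K)$ and $\nu(f) = \int_K \Phi(f)\, d\mu$ for all $f \in \rC(K)$. Because the domain $\rC(K)$ is commutative, $\Phi$ is automatically completely positive, hence a contraction, so $\|\Phi(f)\|_{L^\infty(\mu)} \leq \|f\|$ for every $f$. The remaining task is to write $\Phi(f)(x) = \int_K f\, d\lambda_x$ for a Borel-measurable family $\{\lambda_x\}_{x \in K} \subset P(K)$.

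Because $K$ is compact metrizable, $\rC(K)$ is separable. Fix a countable $\mathbb{Q}(i)$-linear subspace $D \subset \rC(K)$ that is norm-dense, contains $1$, and contains a norm-dense subset $D_0$ of $A(K)$. For each $f \in D$ select a Borel representative $\varphi_f : K \to \bC$ of the equivalence class $\Phi(f)$. Only countably many constraints are in play, so there is a single $\mu$-null set $N \subset K$ outside of which, for every $x$, the map $f \mapsto \varphi_f(x)$ is $\mathbb{Q}(i)$-linear, is nonnegative on the positive cone of $D$, sends $1$ to $1$, satisfies $|\varphi_f(x)| \leq \|f\|$, and agrees with $a(x)$ whenever $f = a \in D_0$. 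For $x \notin N$, this functional extends by continuity to a positive unital contraction on $\rC(K)$, which by the Riesz-Markov-Kakutani theorem is integration against a unique probability measure $\lambda_x \in P(K)$; set $\lambda_x = \delta_x$ for $x \in N$.

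Verification of (1)--(3) is then routine. For (1), density of $D_0$ in $A(K)$ together with continuity of $\lambda_x$ as a functional on $\rC(K)$ gives $\lambda_x(a) = a(x)$ for every $a \in A(K)$ and every $x \notin N$. For (2), the function $x \mapsto \lambda_x(f)$ is Borel for each $f \in D$ by construction, and uniform approximation by elements of $D$ extends Borel measurability to all $f \in \rC(K)$. For (3), the identity $\int f\, d\nu = \int \lambda_x(f)\, d\mu$ holds for $f \in D$ because $\lambda_x(f)$ represents $\Phi(f)$ off a null set, and extends to $\rC(K)$ by uniform approximation and bounded convergence. The principal obstacle lies in the passage from the algebraically defined $\Phi$, whose values are equivalence classes in $L^\infty(\mu)$, to a genuine pointwise kernel: one must simultaneously choose countably many Borel representatives and then discard a single $\mu$-null set on which linearity, positivity, the unit condition, and the affine identities all hold. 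This step relies essentially on the separability of $\rC(K)$ and breaks down for non-metrizable $K$, which is precisely why the authors' general extension of Cartier's theorem stated earlier in the introduction must proceed via purely operator-algebraic methods rather than disintegration.
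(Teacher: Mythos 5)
Your proof is correct, but it is worth situating it carefully against the paper. The paper offers no proof of Theorem \ref{thm:cartier} at all: it is quoted as a classical result (proved in the literature via disintegration of measures) and is then \emph{used} in Theorem \ref{T:dilation via dilation} to prove the metrizable case of $\prec_c\Rightarrow\prec_d$. Your argument runs that implication in reverse, so you must be explicit that the instance of Theorem \ref{thm:equivalence-choquet-dilation-order} you invoke is the Cartier-free proof (Theorem \ref{T:choquet-implies-dilation-order} via Lemma \ref{L:technical lemma}), not Theorem \ref{T:dilation via dilation}; otherwise the argument is circular. With that caveat your route is sound, and it closely parallels what the paper does in Section \ref{S:choquet}: there the authors pass from Corollary \ref{C:cp-version-of-dilation-order} to a pointwise kernel by applying Maharam's lifting theorem to get a positive unital lifting $\rho:L^\infty(\mu)\to M^\infty(\B)$, obtain the weaker conclusion that $\lambda_x(a)=a(x)$ $\mu$-a.e.\ for each fixed $a$, and then (in the Remark) use separability of $A(K)$ in the metrizable case to upgrade this to $\lambda_x\in P_x(K)$ a.e. You instead bypass the lifting theorem entirely by choosing Borel representatives $\varphi_f$ for a countable dense $\mathbb{Q}(i)$-subspace $D$ and discarding a single null set; this is more elementary and self-contained in the metrizable setting (where separability of $\rC(K)$ is available), whereas the paper's lifting argument is what survives in the non-metrizable case, at the cost of the weaker form of condition (1). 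One small simplification available to you: the positivity of the limiting functional $f\mapsto\varphi_f(x)$ follows already from unitality plus the bound $|\varphi_f(x)|\le\|f\|$ (a unital contractive functional on a C*-algebra is a state), so imposing positivity on the cone of $D$ separately is redundant. The remaining verifications of (1)--(3) by density and dominated convergence are as routine as you claim.
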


In the metrizable setting, Cartier's theorem can be used to give a short proof of one direction of the equivalence between Choquet order and the strong dilation order. In the non-metrizable setting, where we are unable to make use of Cartier's theorem, we will need to work much harder to prove this result. However, our methods will yield an extension of Cartier's theorem to the non-metrizable setting, which seems to have been an open problem for a considerable time (see e.g. \cite{Eff1971}). Moreover it provides a new proof of Cartier's theorem in the metrixable case which does not use disintegration of measures.

\section{The Choquet order} \label{sec:strong-dilation-order}

Let $\mu$ be a measure on $K$.
We consider the algebra $L^\infty(\mu)$ as embedded into $\B(L^2(\mu))$ as the multiplication operators.
The following definition was motivated by the dilation order, which we will define later. In subsequent sections we will see that this notion relates to Cartier's theorem and the theory of liftings.

\begin{defn} \label{defn:strong-dilation-order}
Let $K$ be a compact convex subset of a locally convex vector space. 
The {\em strong dilation order} ``$\prec_s$'' on $M^+(K)$ is defined for $\mu,\nu \in M^+(K)$ by $\mu \prec_s \nu$ if there is a positive map $\Phi:\rC(K) \to L^\infty(\mu)$ such that 
\[ \Phi(a) = \pi_\mu(a) \qforal a \in A(K) ,\]
and
\[ \nu(f) = \ip{\Phi(f) 1_\mu, 1_\mu} \qforal f \in \rC(K).\]
\end{defn}

We will show that the strong dilation order is equivalent to the Choquet order. The next result is the easy direction.

\begin{thm}\label{T:strong_diln_order_implies_Choquet_order}
Let $K$ be a compact convex subset of a locally convex vector space, and let $\mu,\nu \in M^+(K)$. 
Then $\mu \prec_s \nu$ implies that $\mu \prec_c \nu$.
\end{thm}

\begin{proof}
Suppose that $\mu \prec_s \nu$. 
Let $\Phi:\rC(K) \to L^\infty(\mu)$ be a positive map such that $\Phi|_{A(K)} = \pi_\mu|_{A(K)}$ and $\nu(f) = \ip{\Phi(f) 1_\mu, 1_\mu}$ for all $f\in\rC(K)$.

Fix a convex function $f\in P(K)$. Then 
\[ f = \sup \{ a \in A(K) : a \le f \} . \]
Since $L^\infty(\mu)$ is a commutative von Neumann algebra, we can take supremums in this algebra.
Since $\Phi$ is positive,  we obtain
\begin{align*}
 \pi_\mu(f) &= \sup \{ \pi_\mu(a) : a \in A(K),\ a \le f \}\\
 &= \sup \{ \Phi(a) : a \in A(K),\ a \le f \}\\
 &\le  \Phi(f).
\end{align*}
Now we can compute
\begin{align*}
 \mu(f) &= \ip{ \pi_\mu(f) 1_\mu, 1_\mu} 
 \le  \ip{ \Phi(f) 1_\mu, 1_\mu} = \nu(f) .
\end{align*}
This shows that $\mu \prec_c \nu$.
\end{proof}

For the other direction, we will first give a short proof in the metrizable case using Cartier's theorem from Section \ref{sec:cartier}. The apparent simplicity of the proof is deceptive, since the proof of Cartier's theorem requires a significant amount of work using the theory of disintegration of measures.

\begin{thm} \label{T:dilation via dilation}
Let $K$ be a metrizable compact convex subset of a locally convex space, and let $\mu,\nu \in M^+(K)$. Then $\mu \prec_c \nu$ implies $\mu \prec_s \nu$.
\end{thm}

\begin{proof}
By Theorem \ref{thm:cartier} (Cartier's theorem), there is a family of probability measures $\{\lambda_x : x \in K\}$ such that $\delta_x \prec_c \lambda_x$ for $\mu$-a.e.\,$x \in K$, the map $f \to \lambda_x(f)$ is $\mu$-measurable and $\nu(f) = \int_K \lambda_x(f)\, d\mu$ for all $f \in \rC(K)$.

Define a positive unital map $\Phi:\rC(K) \to L^\infty(\mu)$ by $\Phi(f)(x) = \lambda_x(f)$, and identify functions in $L^\infty(\mu)$ with the corresponding multiplication operators in $\B(L^2(\mu))$. Then for $a \in A(K)$, we have $\Phi(a)(x) = a(x)$ $\mu$-a.e., giving $\Phi(a) = \pi_\mu(a)$. Now for $f\in\rC(K)$,
\[ \ip{ \Phi(f) 1_\mu, 1_\mu} = \int_K \lambda_x(f) \,d\mu = \nu(f) .\]
Therefore $\mu \prec_s \nu$.
\end{proof}

We now present a proof that is valid in the non-metrizable case, where Cartier's theorem does not apply. The proof utilizes operator-algebraic methods, and does not require the theory of disintegration of measures. In fact, as a consequence of our approach, we will obtain an extension of Cartier's theorem to the non-metrizable setting.

We begin with a lemma which is readily obtained by taking a direct sum of two repre\-sen\-ta\-tions.

\begin{lem} \label{lem:dilation-order-additive}
Let $K$ be a compact convex subset of a locally convex vector space, and let $\mu_1,\mu_2,\nu_1,\nu_2 \in M^+(K)$ satisfy  $\mu_1 \prec_s \nu_1$ and $\mu_2 \prec_s \nu_2$. Then $\mu_1 + \mu_2 \prec_s \nu_1 + \nu_2$.
\end{lem}

We first handle the case of an atomic measure. 

\begin{prop} \label{choquet-implies-dilation-discrete}
Let $K$ be a compact convex subset of a locally convex vector space, and let $\mu,\nu \in M^+(K)$. 
Suppose that $\mu$ is atomic, and thus has finite or countable support. Then $\mu \prec_c \nu$ implies $\mu \prec_s \nu$.
\end{prop}

\begin{proof}
Since $\mu$ is atomic, we can write it as $\mu = \sum_{i=1}^\kappa \alpha_i \delta_{x_i}$ for positive real numbers $(\alpha_i)_{i=1}^\kappa$ summing to one and points $(x_i)_{i=1}^\kappa$ in $K$, where $\kappa \in \mathbb{N} \cup \{\infty\}$. By repeated use of the Cartier-Fell-Meyer theorem \cite{Alf}*{Proposition I.3.2}, we obtain measures $(\nu_i)_{i=1}^\kappa$ in $P(K)$ such that 
\[ \delta_{x_i} \prec_c \nu_i \qand  \nu = \sum_{i=1}^\kappa \alpha_i \nu_i . \]

The GNS repre\-sen\-ta\-tion $\pi_\mu : \rC(K) \to B(L^2(\mu))$ of $\mu$ can be written as
\[
\pi_\mu(f) = \sum_{i=1}^\kappa f(x_i) \chi_{x_i} \qfor f \in \rC(K).
\]
Define $\Phi : \rC(K) \to L^\infty(\mu)$ by
\[
\Phi(f) = \sum_{i=1}^\kappa \nu_i(f) \chi_{x_i} \qfor f \in \rC(K).
\]
Then $\Phi(a) = \pi_\mu(a)$ for all $a \in A(K)$, and an easy computation shows 
\[ \ip{ \Phi(f) 1_\mu, 1_\mu } = \sum_{i=1}^\kappa \alpha_i \nu_i(f) = \nu(f) .\]
Therefore, $\mu \prec_s \nu$.
\end{proof}

If $K$ is a compact convex subset of a locally convex vector space, and $\mu,\nu \in M^+(K)$ satisfy $\mu \prec_c \nu$, then we can decompose $\mu = \mu_d + \mu_c$ into its atomic and continuous parts. The Cartier-Fell-Meyer theorem \cite{Alf}*{Proposition I.3.2} implies that we can decompose $\nu$ as $\nu = \nu_1 + \nu_2$ for $\nu_1,\nu_2 \in M^+(K)$ satisfying $\mu_d \prec_c \nu_1$ and $\mu_c \prec_c \nu_2$. Proposition~\ref{choquet-implies-dilation-discrete} yields $\mu_s \prec_s \nu_1$. Thus by Lemma~\ref{lem:dilation-order-additive}, it remains to deal with the continuous part. This is accomplished by approximating $\mu_c$ in an appropriate way by atomic measures.

Consider the following set of pairs of probability measures that are comparable in the Choquet order:
\[ M_c := \{ (\mu, \nu) : \mu \prec_c \nu,\ \mu,\nu \in P(K) \} .\]
By \cite{Alf}*{Lemma I.3.7}, $M_c$ is a weak-$*$ compact convex subset of the product $P(K) \times P(K)$, and the extreme points of $M_c$ are contained in $S = \bigcup_{x\in K} \{\delta_x\}\times P_x(K)$. In particular, it follows from the Krein-Milman theorem that the convex hull of $S$ is weak-$*$ dense in $M_c$.

We require the following technical approximation result.

\begin{lem} \label{L:technical lemma}
Let $\mu,\nu \in P(K)$ be probability measures such that $\mu$ is continuous and $\mu \prec_c \nu$. Let $E \subset A(K)$, $F \subset \rC(K)$ and  $\Xi \subset L^2(\mu)$ be finite sets. Then for $\ep > 0$, there is a unital positive map $\Phi:\rC(K) \to L^\infty(\mu)$ satisfying
\[ \big| \ip{\Phi(a) \xi, \eta} - \ip{\pi_\mu(a) \xi, \eta} \big| < \ep \qforal a \in E \ \text{and all}\ \ \xi, \eta \in \Xi, \]
and
\[ \big| \ip{\Phi(f) 1_\mu, 1_\mu} - \nu(f) \big| < \ep \qforal f \in F .\]
\end{lem}

\begin{proof}
Let $N = \max\{ 1,\ \|a\|_\infty,\  \|\xi\|^2 : a \in E,\ \xi \in \Xi\}$, and set $\ep' = \ep/2N$. Choose $\delta > 0$ so that if $\mu(C) < \delta$, then 
\[ \max\{ \| \chi_C \xi\|^2 : \xi \in \Xi \} < \frac{\ep'}2 = \frac\ep{4N}.\]
By the uniform continuity of $a \in E$ and the compactness of $K$, we may find a finite collection of compact convex sets $K_i \subset K$ for $1 \le i \le n$ such that their interiors cover $K$ and 
\[ |a(x)-a(y)| < \ep' \qforal a \in E \AND x,y \in K_i,\ 1 \le i \le n .\]
Choose Borel sets $B_i \subset \ol{B_i} \subset \Int(K_i)$ that partition $K$ into a disjoint union of sets.
Define $\mu_i = \mu|_{B_i}$. 
Applying the Cartier-Fell-Meyer theorem \cite{Alf}*{Proposition I.3.2} yields measures $\nu_i$ such that $\mu_i \prec_c \nu_i$ and $\sum_{i=1}^n \nu_i = \nu$.
By Urysohn's lemma, there are functions $g_i$ in $\rC(K)$ such that $\chi_{B_i} \le g_i \le \chi_{K_i}$. 

Applying \cite{Alf}*{Lemma I.3.7} as mentioned above to $(\mu_i,\nu_i)$, we can find positive measures $(\sigma_i, \tau_i)$ in the convex hull of $S$ such that $\sigma_i$ is finitely supported and the pair $(\sigma_i,\tau_i)$ approximates $(\mu_i,\nu_i)$. Specifically, there are constants $\alpha_{ik} > 0$, points $x_{ik} \in K$ and probability measures $\tau_{ik} \in P_{x_{ik}}(K)$ such that
\[ \sum_{k=1}^{m_i} \alpha_{ik} = \|\mu_i\|, \quad \sigma_i = \sum_{k=1}^{m_i} \alpha_{ik} \delta_{x_{ik}}, \quad \tau_i = \sum_{k=1}^{m_i} \alpha_{ik} \tau_{ik}, \]
and such that $\sigma_i$ approximates $\mu_i$ and $\tau_i$ approximates $\nu_i$, meaning
\[ |\sigma_i(g_i) - \mu_i(g_i)| < \delta \|\mu_i\| \qfor 1 \le i \le n,  \]
and
\[ |\tau_i(f) - \nu_i(f)| < \ep \|\mu_i\| \qforal f \in F \AND 1 \le i \le n. \]

Notice that
\[ \|\mu_i\|=\|\nu_i\|=\|\sigma_i\|=\|\tau_i\| =  \sum_{k=1}^{m_i} \alpha_{ik}. \]
The condition  
\[ \delta \|\mu_i\| > |\sigma_i(g_i) - \mu_i(g_i)|  = \|\mu_i\| - \sum_{k=1}^{m_i} \alpha_{ik} g(x_{ik})  \]
shows that the set $\G_i = \{k : x_{ik} \in K_i \}$ is sufficiently large that
\[\sum_{k\in\G_i} \alpha_{ik} > (1 - \delta) \|\mu_i\|. \]
This means that most of the mass of $\sigma_i$ is supported on $K_i$. 

Partition each $B_i$ into disjoint Borel sets $B_{ik}$ so that $\mu_i(B_{ik}) = \alpha_{ik}$. 
Define $C = \bigcup_{i=1}^n \bigcup_{k \not\in\G_i} B_{ik}$. Then
\[ \mu(C) = \sum_{i=1}^n \sum_{k \not\in\G_i} \alpha_{ik} < \delta \sum_{i=1}^n \|\mu_i\| = \delta .\] 
Define a positive map $\Phi:\rC(K) \to L^\infty(\mu)$ by
\[ \Phi(f) = \sum_{i=1}^n \sum_{k=1}^{m_i} \tau_{ik}(f) \chi_{B_{ik}} \qfor f \in \rC(K).\]
Identify $L^\infty(\mu)$ with the corresponding multiplication operators on $\B(L^2(\mu))$, so that the range of $\Phi$ is contained in $\B(L^2(\mu))$.
Evidently $\Phi$ is positive, and since each $\tau_{ik}$ is a probability measure, it is clear that $\Phi(1) = 1$.
 
To verify the second inequality, we compute
\begin{align*}
\ip{ \Phi(f) 1_\mu, 1_\mu } &= \sum_{i=1}^n \sum_{k=1}^{m_i} \tau_{ik}(f) \int_K \chi_{B_{ik}}\, d\mu \\
&= \sum_{i=1}^n \sum_{k=1}^{m_i} \alpha_{ik}\tau_{ik}(f)  \\
&= \sum_{j=1}^n \tau_i (f)  . 
\end{align*}
Therefore for $f \in F$, we have
\begin{align*}
 \big| \ip{\Phi(f) 1_\mu, 1_\mu} - \nu(f) \big| &= \Big| \sum_{j=1}^n \tau_i (f)  - \sum_{j=1}^n \nu_i (f) \Big|  \\&
 \le \sum_{i=1}^n |\tau_i(f) - \nu_i(f)| \\&
 <   \sum_{i=1}^n \ep \|\mu_i\| = \ep . 
\end{align*}

To verify the first inequality, first observe that for $a \in E$ and $\xi,\eta\in \Xi$,
\begin{align*}
 \ip{\Phi(a) \xi, \eta} &= \sum_{i=1}^n \sum_{k=1}^{m_i} \tau_{ik}(a) \ip{ \chi_{B_{ik}}\xi, \eta} \\&
 = \sum_{i=1}^n \sum_{k=1}^{m_i} a(x_{ik}) \ip{ \chi_{B_{ik}}\xi, \eta}  
\end{align*}
and
\[
 \ip{\pi_\mu(a) \xi, \eta} = \sum_{i=1}^n \sum_{k=1}^{m_i}  \ip{ a \chi_{B_{ik}}\xi, \eta}.
\]
Recall that $\big| a(x) -a(x_{ik}) \big| < \ep'$ if $x \in K_i$ and $k \in \G_i$ (so that $x_{ik}\in K_i$). Otherwise $\big| a(x) -a(x_{ik}) \big| \le 2 \|a\|_\infty \le 2N$. Also, since $\mu(C)<\delta$, we have $\|\chi_C \xi\|^2 < \ep'/2= \ep/4N$. Thus we obtain
\begin{align*}
 \big| \ip{(\Phi(a) - \pi_\mu(a)) \xi, \eta} \big| &
 \le  \sum_{i=1}^n \sum_{k=1}^{m_i} \big| \bip{\big(a -a(x_{ik}) \big)  \chi_{B_{ik}}\xi, \eta}  \big| \\&
 \le \sum_{i=1}^n \sum_{k\in \G_i} \ep' \| \chi_{B_{ik}}\xi \| \, \| \chi_{B_{ik}} \eta\| \\&\quad
 + \sum_{i=1}^n \sum_{k\not\in \G_i} 2 \|a\|_\infty  \| \chi_{B_{ik}}\xi \| \, \| \chi_{B_{ik}} \eta\| \\&
 \le \frac{\ep}{2N} \left( \sum_{i,k} \| \chi_{B_{ik}}\xi \|^2 \right)^{1/2} \left( \sum_{i,k} \| \chi_{B_{ik}}\eta \|^2 \right)^{1/2} \\&\quad
 + 2N \left(\!\! \sum_{i,k\not\in \G_i}\!\!  \| \chi_{B_{ik}}\xi \|^2 \right)^{\!1/2} \left(\!\! \sum_{i,k\not\in \G_i}\!\! \| \chi_{B_{ik}}\eta \|^2 \right)^{\!1/2}\\&
 < \frac{\ep}{2N} \|\xi\|\,\|\eta\| + 2N \| \chi_C \xi \|\, \| \chi_C \eta\| \\&
 < \frac{\ep}{2} + 2N \frac{\ep}{4N} = \ep .
\end{align*}
This completes the proof of the lemma.
\end{proof}

We are now ready to prove the final piece of the equivalence between the Choquet order and strong dilation order.

\begin{thm}\label{T:choquet-equiv-strong-dilation-order}
Let $K$ be a compact convex subset of a locally convex vector space, and let $\mu,\nu \in M^+(K)$. 
Then $\mu \prec_c \nu$ if and only if $\mu \prec_s \nu$.
\end{thm}

\begin{proof} One direction is provided by Theorem~\ref{T:strong_diln_order_implies_Choquet_order}. We complete the proof of the converse.

First assume that $\mu$ is a continuous measure. Form a net $\Lambda$ consisting of tuples $(E,F,\Xi,\ep)$ where $E \subset A(K)$, $F \subset \rC(K)$ and  $\Xi \subset L^2(\mu)$ are finite sets, and $\ep > 0$. Say $(E,F,\Xi,\ep) \le  (E',F',\Xi',\ep')$ provided that $E \subset E'$, $F\subset F'$, $\Xi\subset \Xi'$ and $\ep > \ep'$. For each $\lambda\in\Lambda$, define a positive map $\Phi_\lambda$ using Lemma~\ref{L:technical lemma}.

The net $\{\Phi_\lambda\}_{\lambda\in\Lambda}$ is bounded by 1 since each $\Phi_\lambda$ is unital and positive. By passing to a cofinal subnet, we obtain a point-\textsc{wot} limit $\Phi$ which will also be a unital positive map from $\rC(K)$ into $L^\infty(\mu)$. By the properties of $\Phi_\lambda$ obtained from Lemma~\ref{L:technical lemma}, we see that for all $a\in A(K)$ and $\xi, \eta \in L^2(\mu)$, 
\[  \ip{\Phi(a) \xi, \eta} = \lim_\Lambda \ip{\Phi_\lambda(a) \xi, \eta} = \ip{\pi_\mu(a) \xi, \eta}. \]
Furthermore, for all $f \in \rC(K)$,
\[ \ip{\Phi(f) 1_\mu, 1_\mu} = \lim_\Lambda \ip{\Phi_\lambda(f) 1_\mu, 1_\mu} = \nu(f)  .\]
It follows that $\Phi$ satisfies the hypotheses of Proposition~\ref{P:cp version}. Therefore $\mu \prec_s \nu$.

For a general measure $\mu$, we decompose $\mu$ as $\mu=\mu_d + \mu_c$ where $\mu_d$ is the discrete (atomic) part of $\mu$ and $\mu_c$ is the continuous part. As indicated before Lemma~\ref{L:technical lemma}, we use the Cartier-Fell-Meyer theorem to decompose $\nu$ as $\nu=\nu_1+\nu_2$ with $\mu_d \prec_c\nu_1$ and $\mu_c\prec_c\nu_2$. Applying Proposition~\ref{choquet-implies-dilation-discrete} to the first pair of measures and the previous paragraph to the second pair, we obtain $\mu_d \prec_s \nu_1$ and $\mu_c \prec_s \nu_2$. The result now follows from Lemma~\ref{lem:dilation-order-additive}.
\end{proof}

As a restatement of the results in this section, we  have the following alternative description of the Choquet order.

\begin{cor}\label{C:cp-version-of-choquet-order}
Let $K$ be a compact convex subset of a locally compact vector space, and let $\mu,\nu \in M^+(K)$. 
Then $\mu \prec_c \nu$ if and only if there is a unital positive map $\Phi:\rC(K) \to L^\infty(\mu)$ such that 
\begin{enumerate}
\item $\Phi(a)=a$ \ \ for all $a \in A(K)$, and
\item $\nu(f) = \int_K \Phi(f) \,d\mu$ \ \ for all $f \in \rC(K)$.
\end{enumerate}
\end{cor}

\section{Extending Cartier's Theorem} \label{S:cartier}

Now we will explain how Cartier's theorem can be extended to the non-metrizable setting. Instead of utilizing the theory of disintegration of measures, we will instead utilize Theorem~\ref{T:choquet-equiv-strong-dilation-order}, as well as a lifting theorem of Maharam \cite{Maharam} (see \cite{TulceaI} or \cite{Tulcea_book} for an alternate proof).

Let $(X, \B,\mu)$ be a complete probability space, and let $M^\infty(\B)$ denote the space of bounded measurable functions on $X$. There is a natural quotient map $q:M^\infty(\B) \to L^\infty(\mu)$ obtained by identifying functions which agree $\mu$-a.e. 
Using C*-algebraic terminology, Maharam's lifting theorem says that there is a unital $*$-monomorphism $\rho: L^\infty(\mu) \to M^\infty(\B)$ such that $q \circ \rho$ is the identity map on $L^\infty(\mu)$. In other words, there is a positive unital lifting of this quotient map which is also multiplicative. We actually only require the positivity of this lifting, and not the fact that it is a homomorphism.

\begin{thm} \label{T:cartier} 
Let $K$ be a compact convex subset of a locally convex vector space. Let $\mu, \nu \in M^+(K)$ satisfy $\mu \prec_c \nu$.
Then there is a family $\{\lambda_x\}_{x \in K} \subset P(K)$ of probability measures such that
\begin{enumerate}
\item $\lambda_x(a) = a(x)$ $\mu$-a.e. for all $a \in A(K)$,
\item $f \to \lambda_x(f)$ is $\mu$-measurable for all $f \in \rC(K)$, and
\item $\int f \,d\nu = \int \lambda_x(f) \,d\mu$ for all $f \in \rC(K)$.
\end{enumerate}
\end{thm}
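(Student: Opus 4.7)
The plan is to combine Corollary \ref{C:cp-version-of-dilation-order} with Maharam's lifting theorem to obtain a pointwise-defined family of probability measures.

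First, since $\mu \prec_c \nu$ is equivalent to $\mu \prec_d \nu$ by Theorem~\ref{T:choquet-implies-dilation-order}, Corollary~\ref{C:cp-version-of-dilation-order} supplies a unital positive map $\Phi : \rC(K) \to L^\infty(\mu)$ with $\Phi(a) = a$ for all $a \in A(K)$ and $\nu(f) = \int_K \Phi(f)\,d\mu$ for all $f \in \rC(K)$. The problem with using $\Phi$ directly is that $\Phi(f)$ is only an equivalence class of functions, so there is no canonical way to evaluate it at a single point $x$ in order to define $\lambda_x$.

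To remedy this, I would invoke Maharam's lifting theorem to obtain a positive unital lifting $\rho : L^\infty(\mu) \to M^\infty(\B)$ satisfying $q \circ \rho = \operatorname{id}_{L^\infty(\mu)}$, and then set $\tilde\Phi = \rho \circ \Phi : \rC(K) \to M^\infty(\B)$. For each $x \in K$, the functional $\lambda_x : \rC(K) \to \bC$ defined by
\[
\lambda_x(f) = \tilde\Phi(f)(x)
\]
is unital and positive (since both $\Phi$ and $\rho$ are), hence a state on $\rC(K)$. By the Riesz--Markov--Kakutani theorem, $\lambda_x$ is given by integration against a regular Borel probability measure on $K$, which I also denote $\lambda_x$.

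The three required properties now follow directly. For property (1), if $a \in A(K)$ then $\Phi(a) = a$ in $L^\infty(\mu)$; applying $q \circ \rho = \operatorname{id}$ gives $\rho(a) = a$ as elements of $L^\infty(\mu)$, so $\tilde\Phi(a)(x) = a(x)$ for $\mu$-a.e.~$x$. Property (2) holds because $\tilde\Phi(f) \in M^\infty(\B)$ is by construction a bounded measurable function of $x$. Finally, for (3),
\[
\int_K \lambda_x(f)\,d\mu(x) = \int_K \tilde\Phi(f)(x)\,d\mu(x) = \int_K \Phi(f)\,d\mu = \nu(f),
\]
where the middle equality uses that $\tilde\Phi(f) = \rho(\Phi(f))$ is a pointwise representative of the $L^\infty(\mu)$-class $\Phi(f)$.

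The only substantive obstacle is the one that Cartier's original argument circumvents via disintegration: producing a pointwise-defined, positivity-preserving representative of $\Phi$. This is precisely what Maharam's theorem provides, and the rest of the argument is bookkeeping. I would note that multiplicativity of the lifting is irrelevant here; only positivity and $q \circ \rho = \operatorname{id}$ are used.
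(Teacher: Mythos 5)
Your proposal is correct and follows essentially the same route as the paper: convert $\prec_c$ to $\prec_d$, extract the unital positive map $\Phi:\rC(K)\to L^\infty(\mu)$ from Corollary~\ref{C:cp-version-of-dilation-order}, apply Maharam's lifting theorem to get a pointwise representative, and define $\lambda_x$ via Riesz--Markov--Kakutani. Your closing observation that only positivity of the lifting (not multiplicativity) is needed is also made explicitly in the paper.
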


\begin{proof}
By Theorem~\ref{T:choquet-equiv-strong-dilation-order}, we have $\mu \prec_s \nu$.
Thus we have a unital  positive map $\Phi : \rC(K) \to L^\infty(\mu)$ such that 
$\Phi(a)=a$ for $a \in A(K)$ and $\nu(f) = \int_K \Phi(f) \,d\mu$ for all $f \in \rC(K)$.
Let $\rho: L^\infty(\mu) \to M^\infty(\B)$ be a positive unital lifting to the bounded measurable functions on $K$.
Then for each $x \in K$, the positive linear map $\phi_x : \rC(K) \to \bC$ defined by
\[  \phi_x(f) = \rho \circ \Phi(f) (x) \qfor f \in \rC(K) \]
satisfies $\phi_x(1)= \rho \circ \Phi(1)(x) = 1$. So this is a state on $\rC(K)$. By the Riesz-Markov-Kakutani representation theorem, there is a regular Borel probability measure $\lambda_x \in P(K)$ so that $\phi_x(f) = \int f \,d\lambda_x$. Therefore $f \to \lambda_x(f) = \rho \circ \Phi(f) (x)$ is measurable for every $f\in\rC(K)$, and
\[ \int f \,d\nu = \int \Phi(f) \,d\mu = \int \phi_x(f) \,d\mu(x) = \int \lambda_x(f) \, d\mu(x) .\]
Finally for $a \in A(K)$, since $\Phi(a) = a$, we have $\rho(a) = a$ $\mu$-a.e., and therefore $\lambda_x(a) = a(x)$ $\mu$-a.e.
\end{proof}

\begin{rem}
Theorem \ref{T:cartier} is not quite ideal, because in general we do not know that $\lambda_x \in P_x(K)$ for $\mu$-a.e.\ $x \in K$. 
So it seems that our result has a somewhat weaker conclusion than Cartier's theorem. However, it turns out that our result is equivalent to Cartier's theorem in the metrizable case, as we now explain.

The issue is that we do not know whether the lifting map $\rho$ in our proof can always be chosen to satisfy $\rho(a)=a$ for all $a\in\rC(K)$. Such a lifting is called a strong lifting. If $\rho$ is a strong lifting, then $\phi_x(a) = \rho \circ \Phi(a)(x) = a(x)$ for all $a \in A(K)$, and therefore $\lambda_x \in P_x(K)$.

While strong liftings exist in the metrizable setting and for certain product measures, in general the existence of a strong lifting is a major open problem (see e.g.\ \cite{Tulcea_book}). 
\end{rem}

We now recover Cartier's theorem.

\begin{cor}[Cartier] \label{C:cartier}
Let $K$ be a metrizable compact convex subset of a locally convex vector space. Let $\mu, \nu \in M^+(K)$ satisfy $\mu \prec_c \nu$.
Then there is a family $\{\lambda_x\}_{x \in K} \subset P(K)$ of probability measures such that
\begin{enumerate}
\item  $\lambda_x \in P_x(K)$ for all $x \in K$,
\item $f \to \lambda_x(f)$ is $\mu$-measurable for all $f \in \rC(K)$, and
\item $\int f \,d\nu = \int \lambda_x(f) \,d\mu$ for all $f \in \rC(K)$.
\end{enumerate} 
\end{cor}

\begin{proof}
Let $\{\lambda_x\}_{x \in K} \subset P(K)$ be probability measures that satisfy Theorem~\ref{T:cartier}.
Since  $K$ is metrizable, $A(K)$ is separable. Let $\{ a_n : n\ge1 \}$ be a countable spanning set.
For each $n\ge1$, there is a set $E_n$ with $\mu(E_n)=0$ so that $\lambda_x(a_n) = a_n(x)$ for all $x \in K\setminus E_n$.
Let $E = \bigcup_{n\ge1} E_n$.
Then by density, we have that $\lambda_x(a) = a(x)$ for all $a \in A(K)$ and all $x \in K\setminus E$.
Thus $\lambda_x \in P_x(K)$ if $x \not\in E$.
Since $\mu(E)=0$, we may redefine $\lambda_x := \delta_x$ for $x \in E$.
This has no effect on (2) or (3), but now ensures that (1) also holds.
\end{proof}

\section{Extending \u Sa\u skin's Theorem} \label{S:saskin}

A classical result of Korovkin in approximation theorem states that if $\phi_n: \rC[0,1] \to \rC[0,1]$ is a sequence of positive maps satisfying 
$\lim_n \|\phi_n(g) - g\| = 0$ for each $g \in \{1,x,x^2\}$, then $\lim_n \|\phi_n(f) - f\| = 0$ for all $f \in \rC[0,1]$.

\u Sa\u skin \cite{Sas1967} proved a much more general version of Korovkin's theorem in the setting of a commutative C*-algebra $\rC(X)$ of continuous functions on a compact metrizable space $X$. A subset $G \subset \rC(X)$ containing the constant function $1$ is said to be a {\em Korovkin set} if whenever $\phi_n : \rC(X) \to \rC(X)$ is a sequence of positive linear maps satisfying $\lim_n \|\phi_n(g) - g\| = 0$ for all $g \in G$, then $\lim_n \|\phi_n(f) - f\| = 0$ for all $f \in \rC(X)$. 
\u Sa\u skin proved that if $X$ is metrizable, then a subset $G \subset \rC(X)$ that separates points and contains $1$ is a Korovkin set if and only if $\partial_A X = X$, where $A = \overline{\operatorname{span}(G \cup G^*)}$ denotes the function system generated by $G$.

We will show that \u Sa\u skin's theorem extends to the non-metrizable setting if sequences are replaced by nets.
Our proof will utilize ideas from the theory of operator algebras, and will provide a new proof of \u Sa\u skin's theorem in the metrizable case ((where sequences suffice).

We are motivated by work of Arveson \cite{Arv2011} on non-commu\-ta\-tive versions of these results.
His results will be addressed later in this paper. In this section we will restrict ourselves to the classical commutative setting.

\begin{thm}\label{T:weak-uep}
Let $A$ be a function system in $\rC(X)$ for a compact Hausdorff space $X$. Then the following are equivalent:
\begin{enumerate}
\item $\partial_A X = X$.
\item For any compact Hausdorff space $Y$, unital $*$-homomorphism $\pi:\rC(X) \to \rC(Y)$ and positive linear map $\phi:\rC(X) \to \rC(Y)$ satisfying $\phi|_A = \pi|_A$, it follows that $\phi=\pi$.
\end{enumerate}
\end{thm}

\begin{proof}
Suppose first that (1) fails. That is, there is some $x_0\in X$ and regular probability measure $\mu \ne \delta_{x_0}$ on $X$ which represents $x_0$. Define $\pi:\rC(X) \to \bC$ by $\pi(f) = f(x_0)$ and $\phi(f) = \int_X f \, d\mu$. Then clearly (2) fails.

Conversely, suppose that (1) holds, and let $\pi$ and $\phi$ be given as in (2). We first provide the easy proof of a classical fact.
Notice that for each $y\in Y$, the linear functional map $\psi_y$ on $\rC(X)$ given by $\psi_y( f) = \pi(f)(y)$ is multiplicative.
Since the maximal ideal space of $\rC(X)$ is $X$, there is a unique $x := \pi^*(y) \in X$ such that $\psi_y(f) = f(x)$ for all $f\in\rC(X)$.
It is easy to verify that $\pi^*$ is continuous. Hence $\pi(f) = f \circ \pi^*$.

Fix $y\in Y$, and define the state $\phi_y(f) = \phi(f)(y)$. 
By the Riesz-Markov theorem, there is a regular probability measure $\mu_y$ so that $\phi_y(f) = \int_X f \,d\mu_y$. 
By assumption, $\phi_y(a) = \pi(a)(y) = a(\pi^* y)$ for all $a\in A$. That is, $\mu_y$ is a representing measure for $\pi^* y$. 
The assumption that $\partial_A X = X$ means that every $x\in X$ has a unique representing measure; whence $\mu_y = \delta_{\pi^* y}$.
Therefore $\phi_y(f) = f(\pi^* y) = \pi(f)(y)$ for all $f\in\rC(X)$. Hence $\phi=\pi$ and (2) holds.
\end{proof}

\begin{defn}
Let $X$ be a compact Hausdorff space.
A subset $G \subset \rC(X)$ containing $1$ is a {\em Korovkin set} if whenever $\phi_\lambda:\rC(X) \to \rC(X)$ for $\lambda\in\Lambda$ is a net of positive maps satisfying $\lim_\lambda \|\phi_\lambda(g) - g\| = 0$ for all $g \in G$, then $\lim_\lambda \|\phi_\lambda(f) - f\| = 0$ for all $f \in \rC(X)$. If $X$ is metrizable, nets can be replaced by sequences.
\end{defn}

The following result is our extension of \u Sa\u skin's theorem.

\begin{thm}\label{T:saskin}
Let $X$ be a compact Hausdorff space, and let $G$ be a subset of $\rC(X)$ containing the constant function $1$.
Let $A$ be the function system in $\rC(X)$ spanned by $G \cup G^*$. Then the following are equivalent:
\begin{enumerate}
\item $\partial_A X = X$.
\item $G$ is a Korovkin set.
\item For any compact Hausdorff space $Y$, unital $*$-homomorphism $\pi:\rC(X) \to \rC(Y)$ and net of positive linear maps $\phi_\lambda:\rC(X) \to \rC(Y)$ satisfying $\lim_\lambda \phi_\lambda(g) = \pi(g)$ for all $g \in G$, it follows that $\lim_\lambda \phi_\lambda(f) = \pi(f)$ for all $f \in \rC(X)$.
\end{enumerate}
\end{thm}

\begin{proof}
Replacing $Y$ by $X$ in (3) yields (2). If (2) holds, let $x \in X$ and let $\mu$ be a representing measure for $x$ for the function system $A$.
Fix a neighbourhood base $\O$ of open sets containing $x$. In the metrizable case, this can be chosen to be $\O=\{ b_{1/n}(x) : n\ge1\}$.
Make $\O$ into a net by setting $U < V$ is $U \supset V$. The fact that it is a base ensures that $\O$ is directed in this partial order.
By Urysohn's Lemma and the axiom of choice, there are continuous functions $h_U\in \rC(X)$ such that $h_U(x)=1$, $0 \le h_u \le 1$ and $\supp(h_U) \subset U$.
Define positive maps $\phi_U : \rC(X) \to \rC(X)$ by
\[ \phi_U(f) = f(1-h_U) + \mu(f) h_U \qfor f \in \rC(X) .\]
For any $g\in G$ and $\ep>0$, there is a neighbourhood $U \in \O$ such that $|g(x)-g(y)| < \ep$ for all $y\in U$.
Thus if $V \ge U$,
\[ \| g - \phi_V(g) \|  = \| (g - \mu(g)) h_V \| \le \sup_{y\in V} |g(y) - g(x) | < \ep .\]
So $\phi_V$ converges to the identity on $G$. Since $\phi_V$ are all positive, they also converge on $G^*$ and hence on the span of $G\cup G^*$.
Moreover $\|\phi_V\| = \|\phi_V(1)\| < 1 + \ep$, so they are equicontinuous and this extends to the closed span.
By property (2), this sequence must converge for all functions $f\in\rC(X)$.
In particular, evaluating at $x$, we see that
\[ f(x) = \lim_U \phi_U(f)(x) = \mu(f) .\]
So $x$ has a unique representing measure, and therefore $x \in \partial_A X$. That is, (1) holds.

If (1) holds, let $\pi:\rC(X) \to \rC(Y)$ be a $*$-homomorphism and consider any net of positive linear maps $\phi_\lambda:\rC(X) \to \rC(Y)$ satisfying $\lim_\lambda \phi_\lambda(g) = \pi(g)$ for all $g \in G$. Since $1 \in G$ and thus $\lim_\lambda \phi_\lambda(1)=1$, we may suppose that $\| \phi_\lambda(1) - 1 \| \le .5$ for $\lambda \ge \lambda_0$ and restrict the net $\Lambda$ to this cofinal subset. Then we have $\|\phi_\lambda\| = \|\phi_\lambda(1)\| \le 1.5$.

Let $l^\infty(\Lambda, \rC(X))$ denote the abelian C*-algebra of bounded functions from $\Lambda$ into $\rC(X)$.
This algebra has an ideal $\J = c_0(\Lambda, \rC(X))$ consisting of all functions $f=(f_\lambda)$ with $\lim_\lambda \|f_\lambda\| = 0$.
The C*-algebra $B = l^\infty(\Lambda, \rC(X)/\J$ is abelian, and thus by Gelfand theory is isomorphic to $\rC(Y)$ for some compact Hausdorff space $Y$.
Define a $*$-homomorphism $\pi:\rC(X) \to \rC(Y)$ by $\pi(f) =  (f,f,\dots) + \J$; i.e. the function is the constant function $f_\lambda=f$ modulo the ideal $\J$. Also define a positive map $\phi: \rC(X) \to \rC(Y)$ by $\phi(f) = (\phi_\lambda(f)) + \J$. The hypotheses guarantee that $\phi|_A = \pi|_A$.
Therefore by Theorem~\ref{T:weak-uep}, $\phi=\pi$. This means that $(f - \phi_\lambda(f))$ belongs to $\J$; that is, $\lim_\lambda \phi_\lambda(f) = \pi(f)$ for all $f \in \rC(X)$. So (3) holds.
\end{proof}

Kleski \cite{Kle2014}  has obtained a version of Korovkin's Theorem that applies when the C*-algebra generated by the operator system is separable and of type I, 
and the range of the maps belongs to the double commutant. 
This includes function systems in $C(X)$ for $X$ metrizable and unital completely positive maps with range in $L^\infty(X,\mu)$ for a probability measure $\mu$ on $X$.

\section{Dilation order} \label{sec:dilation-order}

We now turn to the relationship between the classical commutative setting and the noncommutative setting. In this section we will introduce the dilation order for measures on a compact convex set. We will see that this notion is closely related to Arveson's hyperrigidity conjecture. We will also establish its relationship to the Choquet order.

Recall Definition~\ref{D:representation} of a $*$-representation of a measure.

\begin{defn} \label{defn:dilation}
Let $K$ be a compact convex subset of a locally convex vector space and let $\mu,\nu \in M^+(K)$ be measures. We say that a repre\-sen\-ta\-tion $(\pi, H, \xi)$ of $\mu$ is {\em dilated} by a repre\-sen\-ta\-tion $(\sigma, L, \eta)$ of $\nu$ if there is an isometry $V : H \to L$ such that
\begin{enumerate}
\item $\pi(a) = V^* \sigma(a) V$ \ for every $a \in A(K)$, and
\item $\eta = V\xi$.
\end{enumerate}
\end{defn}

\begin{defn} \label{defn:dilation-order}
Let $K$ be a compact convex subset of a locally convex vector space. The {\em dilation order} ``$\prec_d$'' on $M^+(K)$ is defined for $\mu,\nu \in M^+(K)$ by $\mu \prec_d \nu$ if some repre\-sen\-ta\-tion of $\mu$ is dilated by a repre\-sen\-ta\-tion of $\nu$ in the sense of Definition~\ref{defn:dilation}.
\end{defn}

The next two propositions are useful for working with the dilation order.

\begin{prop} \label{prop:non-symmetric-dilation}
Let $K$ be a compact convex subset of a locally convex vector space and let $\mu,\nu \in M^+(K)$. If $\mu \prec_d \nu$, then the GNS repre\-sen\-ta\-tion of $\mu$ is dilated by a repre\-sen\-ta\-tion of $\nu$.
\end{prop}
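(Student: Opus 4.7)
The plan is to show that the given (possibly non-cyclic) representation $(\pi, H, \xi)$ of $\mu$ always contains the GNS representation as its cyclic part, and then to check that the isometry witnessing the dilation can be restricted/transferred to that cyclic piece without losing either of the two defining properties in Definition~\ref{defn:dilation}.

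More precisely, by hypothesis there is an isometry $V: H \to L$ with $\pi(a) = V^*\sigma(a)V$ for $a \in A(K)$ and $V\xi = \eta$. First I would form the cyclic subspace $H_0 := \ol{\pi(\rC(K))\xi} \subset H$. Since $A(K) \subset \rC(K)$, the subspace $H_0$ is invariant under $\pi(a)$ for every $a \in A(K)$; hence the compression $\pi_0 := \pi(\cdot)|_{H_0}$ is in fact just the restriction, and a straightforward calculation shows that the restricted isometry $V_0 := V|_{H_0} : H_0 \to L$ satisfies
\[
V_0^* \sigma(a) V_0 = \pi_0(a) \qforal a \in A(K),
\]
and $V_0 \xi = V \xi = \eta$. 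Thus $(\pi_0, H_0, \xi)$ is a cyclic representation of $\mu$ dilated by $(\sigma, L, \eta)$.

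Next I would invoke the minimality of the GNS construction recalled in Section~\ref{sec:comm-c-star-alg}: the cyclic representation $(\pi_0, H_0, \xi)$ is unitarily equivalent to $(\pi_\mu, L^2(\mu), 1_\mu)$ via a unitary $U : L^2(\mu) \to H_0$ that intertwines $\pi_\mu$ with $\pi_0$ and sends $1_\mu$ to $\xi$. Setting $W := V_0 U : L^2(\mu) \to L$, the composition is an isometry, $W(1_\mu) = V_0\xi = \eta$, and for every $a \in A(K)$,
\[
W^* \sigma(a) W = U^* V_0^* \sigma(a) V_0 U = U^* \pi_0(a) U = \pi_\mu(a).
\]
Therefore $W$ exhibits $(\pi_\mu, L^2(\mu), 1_\mu)$ as dilated by $(\sigma, L, \eta)$.

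There is no real obstacle here; the only point requiring a moment of care is that compressing the isometry to $H_0$ preserves the identity $\pi(a) = V^*\sigma(a)V$ on $A(K)$, and this works only because $H_0$ is $\pi(A(K))$-invariant (in fact $\pi(\rC(K))$-invariant), so the compression coincides with the restriction. Once that is noted, the argument reduces to transporting the dilation along the canonical unitary equivalence between the cyclic subrepresentation of $(\pi,H,\xi)$ and the GNS representation of $\mu$.
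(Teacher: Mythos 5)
Your proof is correct and takes essentially the same route as the paper: restrict the dilating isometry $V$ to the cyclic ($\pi$-reducing) subspace generated by $\xi$ and transport the result along the canonical GNS unitary. The paper phrases this as ``we may assume $H = L^2(\mu) \oplus H'$ and $\pi = \pi_\mu \oplus \pi'$,'' which is precisely your conjugation by $U$, so the two arguments coincide.
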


\begin{proof}
Suppose that $\mu \prec_d \nu$. Then some repre\-sen\-ta\-tion $(\pi, H, \xi)$ of $\mu$ is dilated by a repre\-sen\-ta\-tion $(\sigma, L, \eta)$ of $\nu$. We claim that the GNS repre\-sen\-ta\-tion $(\pi_\mu, L^2(\mu), 1_\mu)$ of $\mu$ is also dilated by $(\sigma, L, \eta)$. To see this, let $V : H \to L$ be the isometry as in Definition \ref{defn:dilation}. By the remarks in Section \ref{sec:comm-c-star-alg}, the restriction of $\pi$ to the cyclic invariant subspace for $\pi$ generated by $\xi$ is unitarily equivalent to $\pi_\mu$ via a unitary that maps $1_\mu$ to $\xi$. Hence we can assume that $H = L^2(\mu) \oplus H'$ for some Hilbert space $H'$, $\xi = 1_\mu$ and $\pi = \pi_\mu \oplus \pi'$ for some $*$-repre\-sen\-ta\-tion $\pi' : \rC(K) \to \B(H')$. Let $W : L^2(\mu) \to L$ be the restriction $W = V|_{L^2(\mu)}$. Then $W$ is an isometry, $\pi_\mu(a) = W^* \sigma(a) W$ and $\eta = W1_\mu$. Hence $(\pi_\mu, L^2(\mu), 1_\mu)$ is dilated by $(\sigma, L, \eta)$.
\end{proof}

The following characterization of the dilation order looks similar to the definition of the strong dilation order in Definition \ref{defn:strong-dilation-order}. Note that here the range is $\B(H)$ instead of $L^\infty(\mu)$. We will see that this is an important distinction.

\begin{prop} \label{P:cp version}
Let $K$ be a compact convex subset of a locally convex vector space and let $\mu,\nu \in M^+(K)$.
Then $\mu \prec_d \nu$ if and only if there is a positive map $\Phi:\rC(K) \to \B(L^2(\mu))$ such that 
\[ \Phi(a) = \pi_\mu(a) \qforal a \in A(K) ,\]
and
\[ \nu(f) = \ip{\Phi(f) 1_\mu, 1_\mu} \qforal f \in \rC(K).\]
\end{prop}

\begin{proof}
First suppose that $\mu \prec_d \nu$. By Proposition~\ref{prop:non-symmetric-dilation}, there is an isometry $V:L^2(\mu) \to L$ and a representation $(\sigma,  L,\eta)$ such that $V1_\mu = \eta$ and $\pi_\mu(a) = V^*\sigma(a)V$ for all $a\in A(K)$.
The positive map $\Phi$ into $\B(L^2(\mu))$ is given by $\Phi(f) = V^*\sigma(f)V$. Then for all $f \in \rC(K)$,
\[ \nu(f) = \ip{\sigma(f) \eta, \eta} =  \ip{\sigma(f) V 1_\mu, V 1_\mu} = \ip{\Phi(f) 1_\mu, 1_\mu} .\]

For the converse, note that since $\rC(K)$ is commutative, a result of Stinespring \cite{St1955} (see \cite{Paulsen}*{Theorem 3.11}) implies that $\Phi$ is completely positive. Therefore, we may apply Stinespring's dilation theorem \cite{St1955} (see \cite{Paulsen}*{Theorem 4.1}) to obtain a $*$-repre\-sen\-ta\-tion $\sigma:\rC(K) \to \B(L)$ and an isometry $V:L^2(\mu) \to L$ such that
\[ \Phi(f) = V^* \sigma(f) V \qforal f \in \rC(K) .\]
In particular,
\[ \pi_\mu(a) = V^* \sigma(a) V \qforal a \in A(K), \]
and setting $\eta = V1_\mu$, it follows that for all $f \in \rC(K)$,
\[ \ip{ \sigma(f) \eta, \eta} = \ip{ V^* \sigma(f) V 1_\mu, 1_\mu} = \ip{\Phi(f) 1_\mu, 1_\mu} = \nu(f) .\]
Therefore $\mu \prec_d \nu$.
\end{proof}

\begin{cor}\label{C:diln-order-implies-choquet}
Let $K$ be a compact convex subset of a locally convex vector space and let $\mu,\nu \in M^+(K)$.
Then $\mu \prec_c \nu$ implies that $\mu \prec_d \nu$.
\end{cor}

\begin{proof}
It is evident that the strong dilation order is more restrictive than the dilation order.
So this result is immediate from Theorem~\ref{T:choquet-equiv-strong-dilation-order}.
\end{proof}

A continuous function defined on an interval $J \subset \bR$ is called \textit{operator convex} provided that
whenever $X, Y$ are self-adjoint operators with spectra contained in $J$, we have
\[ f(tX + (1-t)Y) \le t f(A) + (1-t) f(Y) .\]
These functions are related to the operator monotone functions of L\"owner, and were studied by Bendat and Sherman \cite{BS1955}.
They show that like operator monotone functions, these operator convex functions extend to be analytic in the upper half plane.
A key role is played by the functions $f_t(x) = x^2(1-tx)^{-1}$, which are operator convex on $(-1,1)$ provided that $|t|<1$.
Hansen and Pedersen \cites{HP1982, HP2003} introduced the notion of truly operator convex combinations, and showed in the 
latter paper that operator convex functions satisfy even stronger inequalities, where the scalar $t$ is replaced by a contraction. Specifically, they showed that when $X_1,\dots, X_n$ are self-adjoint operators
with spectra in $J$ and $A_i$ are operators satisfying $\sum_{i=1}^n A_i^*A_i=I$, then
\[ f \big( \sum_{i=1}^n A_i^* X_iA_i \big) \le \sum_{i=1}^n A_i^* f(X_i) A_i .\]

Define $\OP(K)$ to be the closed cone of functions generated by
\[ \{ f(a) : a=a^* \in A(K),\ f \text{ operator convex on interval } J \supset \sigma(a) \} .\]
Then $\OP(K)$ contains the continuous affine functions and is contained in the set $P(K)$ of all convex functions. If $K$ is non-trivial, then this inclusion is proper. 
We will see that
\[ \OP(K) \cap -\OP(K) = A(K) .\]

The critical observation for us is that domination in dilation order implies domination on $\OP(K)$.

\begin{lem}\label{L:diln-order-implies-oc-order}
Let $K$ be a compact convex subset of a locally convex vector space, and let $\mu,\nu \in M^+(K)$. 
If $\mu \prec_d \nu$, then $\mu(f) \le  \nu(f)$ for all $f \in \OP(K)$.
\end{lem}

\begin{proof}
Since $\mu \prec_d \nu$, we have an isometry $V:L^2(\mu) \to L$ and a representation $(\sigma, L, \eta)$ of $\rC(K)$ on $L$
such that $V1_\mu = \eta$, $\nu(f) = \ip{\sigma(f) \eta, \eta}$ and $\pi_\mu(a) = V^*\sigma(a)V$ for $a \in A(K)$.
We let $\Phi(g) = V^* \sigma(g)V$ be the positive map of Proposition~\ref{P:cp version}.

Let $a \in A(K)$ be self-adjoint and let $f$ be an operator convex function on an interval $J \supset \sigma(a)$.
Then by operator convexity,
\begin{align*}
\pi_\mu(f(a)) &= f( \pi_\mu(a) ) \\&
= f( V^*\sigma(a)V ) \le V^* f(\sigma(a))V \\&
= V^* \sigma(f(a)) V = \Phi(f(a)) .
\end{align*}
Consequently, 
\[ \mu(f(a)) = \ip{\pi_\mu(f(a)) 1_\mu, 1_\mu} \le \ip{ \Phi(f(a)) 1_\mu, 1_\mu} = \nu(f(a)) .\]
This extends to positive linear combinations and uniform limits, and thus to all of $\OP(K)$.
\end{proof}

Lemma~\ref{L:diln-order-implies-oc-order} allows us to show that the dilation order is indeed a partial order, and not just a preorder.

\begin{prop} \label{P:oc_order}
Let $K$ be a compact convex subset of a locally convex vector space, and let $\mu,\nu \in M^+(K)$. 
If $\mu$ and $\nu$ coincide on the set $\OP(K)$, then $\mu=\nu$. Thus the dilation order is a partial order.
Moreover $\OP(K) - \OP(K)$ is norm dense in $\rC_\bR(K)$.
\end{prop}

\begin{proof}
Let $a=a^* \in A(K)$ with $\|a\| < 1$. Then $\OP(K)$ contains $f_t(a) = a^2(1-ta)^{-1}$ for $|t|<1$. Note that
\[ \mu(f_t(a)) = \mu\big( \sum_{k\ge0} a^{k+2} t^k \big) = \sum_{k\ge0} \mu(a^{k+2}) t^k .\]
Since $\nu(f_t) = \mu(f_t)$, the coincidence of these two power series on $(-1,1)$ implies that $\nu(a^k) = \mu(a^k)$ for $k \ge 2$.
Since $\mu$ and $\nu$ also agree on $A(K)$, this identity is valid for $k\ge0$.

Now let $a_1,\dots, a_n$ be elements of $A(K)$ with $\|a_i\| \le1$.
Then $\sum_{i=1}^n t_i a_i$ are contractions in $A(K)$ for $|t_i|<\frac1n$.
Arguing as above, we obtain that for $k \ge 0$, 
\[
  \mu \big( \big( \sum_{i=1}^n t_i a_i \big)^k \big) 
  = \sum_{\substack{k_i\ge 0\\ k_1 + \dots + k_n = k}} \frac{k!}{k_1! \dots k_n!} \mu(a_1^{k_1}\dots a_n^{k_n}) t_1^{k_1}\dots t_n^{k_n}
\]
agrees with $\nu \big( \big( \sum_{i=1}^n t_i a_i \big)^k \big)$ on $(-\frac1n,\frac1n)^n$. Again this forces a coincidence of the Taylor coefficients.
Consequently
\[ \nu (a_1^{k_1}\dots a_n^{k_n}) = \mu (a_1^{k_1}\dots a_n^{k_n}) \qforal a_i\in A(K) \AND k_i \ge 0 .\]
It follows that $\mu$ and $\nu$ agree on the algebra generated by $A(K)$, namely $\rC(K)$.
Thus $\mu=\nu$. In particular, if $\mu \prec_d \nu \prec_d \mu$, then by Lemma~\ref{L:diln-order-implies-oc-order}, $\mu$ and $\nu$ agree on $\OP(K)$, and therefore $\mu=\nu$.

The Hahn-Banach Theorem now implies that the real span of $\OP(K)$, which is $\OP(K) - \OP(K)$, is dense in $\rC_\bR(K)$.
\end{proof}

\section{The unique extension property} \label{sec:unique-extension-property}

In this section we will relate the dilation order to some constructions in the theory of operator algebras.

While most of the results from previous sections in this paper can be stated in terms of classical commutative notions, for what follows we will require the theory of completely positive maps and noncommutative C*-algebras.

Recall that if $A$ is a function system, then a map $\phi: A \to \B(H)$ is {\em completely positive} if
each of the maps $\phi^{(n)}: \M_n(A) \to \M_n(\B(H))$ defined by
\[ \phi^{(n)}\big(\big[ a_{ij} \big]\big) = \big[ \phi(a_{ij}) \big] \]
are positive for all $n \geq 1$. For the basic theory of completely positive maps, we refer the reader to Paulsen's book \cite{Paulsen}.

Stinespring proved (see e.g.\ \cite{Paulsen}*{Theorem 3.11}) that if $A$ is a commutative C*-algebra, then every positive map $\phi: A \to \B(H)$ is automatically completely positive. However, this is not true for an arbitrary function system (see e.g.\ \cite{Paulsen}*{Example 2.2}).

It is an important fact that $\B(H)$ is injective in the category of operator systems and completely positive maps  \cite{Arv1969} (see \cite{Paulsen}*{Theorem 7.5}). It follows from this that every completely positive map $\phi:A \to \B(H)$ has a completely positive extension to $C := \ca(A)$. On the other hand, if $\phi$ is positive but not completely positive, then it does not extend to a positive map on $C$, because such an extension would be completely positive, and would therefore imply the complete positivity of $\phi$.

\begin{defn}
Let $A$ be a concrete function system that generates a commutative C*-algebra $\rC(X)$. A unital  completely positive map $\phi:A \to \B(H)$  is said to have the {\em unique extension property} if it has a unique completely positive extension $\pi:C \to \B(H)$ and $\pi$ is a $*$-homomorphism.
Similarly, we will say that a $*$-repre\-sen\-ta\-tion $\pi : C \to \B(H)$ has the {\em unique extension property relative to $A$} if the restriction $\phi = \pi|_A$ has the unique extension property.
\end{defn}

\begin{defn} \label{defn:maximal-map}
Let $A$ be a function system and let $\phi : A \to \B(H)$ and $\psi : A \to \B(L)$ be unital completely positive maps. We say that $\phi$ is {\em dilated} by $\psi$ or that $\psi$ is a {\em dilation} of $\phi$, and write $\phi \prec \psi$, if there is an isometry $V : H \to L$ such that $\phi(a) = V^* \psi(a)V$ for all $a \in A$. If, in addition, the subspace $VH$ is invariant (and hence reducing) for $\psi(A)$, then $\psi$ is said to be a {\em trivial dilation} of $\phi$. If every dilation of $\phi$ is trivial, then $\phi$ is said to be {\em maximal}.
\end{defn}

The above notions naturally extend to the more general setting of operator systems, which are unital self-adjoint subspaces of  possibly noncommutative C*-algebras. In this general setting, completely positive maps are essential, since positive maps, even on C*-algebras, need not be completely positive. 

In his seminal work on noncommutative dilation theory, Arveson \cite{Arv1969} outlined many of the key ideas for a theory of dilations of operator algebras and operator systems. A central theme in his work is the notion of a boundary repre\-sen\-ta\-tion.

\begin{defn}[Arveson] \label{defn:boundary-representation}
Let $A$ be a concrete operator system. An irreducible representation of $\ca(A)$ is  a {\em boundary representation} of $A$ if it has the unique extension property relative to $A$. The {\em noncommutative Choquet boundary} is the set of all boundary representations of $A$.
\end{defn}

In the case of function systems, $\ca(A) = \rC(X)$ and the irreducible representations are the point evaluations $\ep_x$ at points of $X$.
The unique extension property in this case is just the statement that $x$ has a unique representing measure; that is, $x \in \partial_A X$.
So Arveson's notion extends the classical notion of the Choquet boundary.

Arveson conjectured that every concrete operator system has a noncommutative Choquet boundary with the property that the boundary representations provide a completely isometric representation of the operator system. Arveson's conjecture was established in the separable case by Arveson himself \cite{Arv2008}, and in the general case by the authors \cite{DK2015}.

The next result was established for (generally non-self-adjoint) operator algebras by Dritschel and McCullough \cite{DritMcCull}, and for operator systems by Arveson \cite{Arv2008}, both in the non-commutative setting.

\begin{prop}[Dritschel-McCullough, Arveson] \label{prop:unique-extn-iff-maximal}
Let $A$ be a concrete function system that generates a commutative C*-algebra $\rC(X)$. A $*$-repre\-sen\-ta\-tion $\pi : \rC(X) \to \B(H)$ has the unique extension property relative to $A$ if and only if the restriction $\pi|_A$ is maximal.
\end{prop}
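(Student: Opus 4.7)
The plan is to prove both directions by exploiting the injectivity of $\B(L)$ (Arveson's extension theorem) together with Stinespring's dilation theorem, reducing the forward direction to one application of the Kadison--Schwarz inequality.

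For the forward direction, I would suppose that $\pi$ has the unique extension property and let $\psi : A \to \B(L)$ be a dilation of $\phi = \pi|_A$ implemented by an isometry $V : H \to L$. First I would extend $\psi$ to a unital completely positive map $\tilde\psi : \rC(X) \to \B(L)$ using Arveson's extension theorem, and then consider the UCP map $\Phi : \rC(X) \to \B(H)$ defined by $\Phi(f) = V^*\tilde\psi(f)V$. Since $\Phi$ extends $\pi|_A$, the unique extension property forces $\Phi = \pi$; in particular, $\pi(a^*a) = V^*\tilde\psi(a^*a)V$ for every $a \in A$. The key step is then to apply the Kadison--Schwarz inequality to $\tilde\psi$ and sandwich with $V$, obtaining
\[
V^*\tilde\psi(a^*a)V \;\geq\; V^*\tilde\psi(a)^*\tilde\psi(a)V \;\geq\; V^*\tilde\psi(a)^*VV^*\tilde\psi(a)V,
\]
where the second inequality uses $I \geq VV^*$. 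Because $\pi$ is a $*$-homomorphism, the leftmost and rightmost expressions both equal $\pi(a)^*\pi(a)$, so equality holds throughout. This collapses to $(I-VV^*)\tilde\psi(a)V = 0$, hence $\tilde\psi(a)VH \subseteq VH$. Self-adjointness of $A$ then yields the same conclusion for $\tilde\psi(a^*) = \tilde\psi(a)^*$, so $VH$ reduces $\psi(A)$ and the dilation is trivial.

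For the reverse direction, I would suppose that $\pi|_A$ is maximal and let $\Phi : \rC(X) \to \B(H)$ be an arbitrary UCP extension of $\pi|_A$. Stinespring's theorem provides a $*$-representation $\sigma : \rC(X) \to \B(L)$ and an isometry $V : H \to L$ with $\Phi(f) = V^*\sigma(f)V$. The restriction $\sigma|_A$ is a dilation of $\pi|_A$, so maximality forces $VH$ to be reducing for $\sigma(A)$. Because $A$ is self-adjoint and generates $\rC(X)$ as a C*-algebra, the reducing property propagates from $\sigma(A)$ to the C*-algebra $\sigma(\rC(X)) = \rC^*(\sigma(A))$. Thus $\Phi$ is multiplicative, hence a $*$-homomorphism, and since $\Phi$ and $\pi$ are $*$-representations of $\rC(X)$ that agree on the generating set $A$, they must coincide, yielding the unique extension property.

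The main obstacle I anticipate is the forward direction: the passage from uniqueness of the UCP extension to the geometric invariance $\tilde\psi(a)VH \subseteq VH$. This is the step that genuinely uses noncommutative operator-theoretic input via Kadison--Schwarz, even though the ambient C*-algebra is commutative; the trick is to arrange matters so that the Schwarz inequality is forced to collapse to an equality. Everything else---Arveson's extension theorem, Stinespring's construction, and the propagation of invariance from the generating set $A$ to $\rC(X)$---is routine.
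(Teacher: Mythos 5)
Your argument is correct, and there is nothing to compare it against within the paper itself: the proposition is stated there without proof and attributed to Dritschel--McCullough and Arveson. Your proof is essentially the standard argument from those sources --- Arveson's extension theorem followed by the Schwarz-inequality collapse $(I-VV^*)\tilde\psi(a)V=0$ for the forward direction, and Stinespring's dilation together with propagation of the reducing subspace from $\sigma(A)$ to the generated C*-algebra for the converse --- and all the steps (unitality of $\Phi=V^*\tilde\psi(\cdot)V$, self-adjointness of $A$ upgrading invariance to reduction, and agreement of two $*$-homomorphisms on a generating operator system) check out.
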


They also establish that maximal dilations always exist, again in the possibly non-commutative setting \cites{DritMcCull, Arv2008}.

\begin{prop}[Dritschel-McCullough, Arveson] \label{P:maximal-dilations-exist}
Let $A$ be a concrete function system that generates a commutative C*-algebra $\rC(X)$, and let $\phi:A \to \B(H)$ be a completely positive unital map.
Then there is a maximal completely positive map $\psi:A \to \B(L)$ which dilates $\phi$.
\end{prop}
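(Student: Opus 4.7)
The plan is to build $\psi$ by transfinite iteration of non-trivial dilations until the process stabilizes. Set $\phi_0 = \phi$. At a successor ordinal $\alpha + 1$, if $\phi_\alpha$ is already maximal we halt and take $\psi = \phi_\alpha$; otherwise we pick any non-trivial dilation $\phi_{\alpha+1} : A \to \B(H_{\alpha+1})$ of $\phi_\alpha$ (which exists by the failure of maximality) and record an isometry $V_{\alpha, \alpha+1} : H_\alpha \hookrightarrow H_{\alpha+1}$ implementing it. At a limit ordinal $\lambda$, the isometries compose coherently to give a directed system $(H_\alpha, V_{\alpha\beta})_{\alpha \le \beta < \lambda}$; let $H_\lambda$ be the Hilbert space completion of its algebraic direct limit and define $\phi_\lambda : A \to \B(H_\lambda)$ on the cofinal family $\{V_{\alpha\lambda}(H_\alpha)\}$ by $V_{\alpha\lambda}^* \phi_\lambda(a) V_{\alpha\lambda} = \phi_\alpha(a)$. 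Complete positivity and unitality of $\phi_\lambda$ follow from the corresponding properties of the $\phi_\alpha$ after checking on matrix entries in the cofinal family.

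To force termination I would control the dimensional growth at each step. If $\phi_\alpha$ is not maximal, some non-trivial dilation $\phi' : A \to \B(H')$ exhibits vectors $h \in H_\alpha$ and $a \in A$ with $P_{H_\alpha^\perp} \phi'(a) h \neq 0$. Passing to the minimal Stinespring dilation of $\phi'$ and compressing to the cyclic subspace generated by $H_\alpha$ together with a single witness vector produces a non-trivial dilation $\phi_{\alpha+1}$ whose Hilbert space has dimension at most $\max(\dim H_\alpha, |A|, \aleph_0)$. Fixing an infinite cardinal $\kappa$ strictly greater than this bound for $\alpha = 0$, the construction stays inside the set $\mathcal{D}_\kappa$ of (equivalence classes of) dilations of $\phi$ of dimension at most $\kappa$, and no strictly $\prec$-increasing chain in $\mathcal{D}_\kappa$ can have length exceeding $\kappa^+$, so the iteration must halt. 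An equivalent packaging is to apply Zorn's lemma directly to $(\mathcal{D}_\kappa, \prec)$, using the limit construction above as the upper-bound step for chains.

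The main obstacle is precisely this dimensional bookkeeping: one must verify that non-maximality always furnishes a \emph{bounded-dimension} non-trivial dilation, and that $\prec$-maximality within the bounded family $\mathcal{D}_\kappa$ implies genuine maximality in the sense of Definition~\ref{defn:maximal-map} (otherwise a hypothetical non-trivial dilation outside $\mathcal{D}_\kappa$ would, after the same one-vector compression, still contradict $\prec$-maximality inside $\mathcal{D}_\kappa$). Without this control the iteration runs through a proper class and neither Zorn's lemma nor bounded transfinite induction is available. The only input from the ambient C*-algebra $\rC(X)$ is Arveson's extension theorem, used implicitly when taking Stinespring dilations to produce candidate non-trivial dilations; no commutativity of $\rC(X)$ is actually needed here, which is why the conclusion holds in the more general non-commutative setting of \cite{Arv2008} and \cite{DritMcCull}.
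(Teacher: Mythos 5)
The paper does not prove this proposition; it cites Dritschel--McCullough and Arveson, so I am comparing your argument against theirs. Your limit construction at limit ordinals is fine, and your one-vector compression correctly shows that non-maximality always furnishes a non-trivial dilation that enlarges the space by a single dimension. The genuine gap is the termination step, which is exactly the hard point of the known proofs and which your dimensional bookkeeping does not settle. The bound $\dim H_\alpha \le \dim H_0 + |\alpha|$ shows only that a strictly increasing chain \emph{leaves} $\mathcal{D}_\kappa$ by stage $\kappa^+$; it does not show that the recursion ever reaches a maximal map, since nothing prevents the iteration from continuing through all ordinals with the dimension growing unboundedly (each stage is a set-level construction, so the transfinite recursion itself never breaks down). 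The Zorn packaging fails for the same reason: a strictly increasing chain of order type $\kappa^+$ can sit entirely inside $\mathcal{D}_\kappa$ (every position $\alpha<\kappa^+$ has $|\alpha|\le\kappa$), yet its direct limit has dimension $\kappa^+$ and so has no upper bound in $\mathcal{D}_\kappa$. There is no a priori cardinal bound on the dimension of a maximal dilation of $\phi$ available at this point in the argument, so no choice of $\kappa$ rescues either packaging.

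What is missing is the numerical invariant that Dritschel--McCullough and Arveson use to force termination. For a pair $(a,\xi)\in A\times H$ one says $\phi$ is \emph{maximal at $(a,\xi)$} if every dilation $\psi$ with isometry $V$ satisfies $\|\psi(a)V\xi\|=\|\phi(a)\xi\|$. The three key facts are: (i) $\phi$ is maximal if and only if it is maximal at every pair; (ii) the quantity $\sup_{\psi\succ\phi}\|\psi(a)V\xi\|^2$ is bounded by $\|a\|^2\|\xi\|^2$ and is non-decreasing along dilations, so once (near-)attained at a pair it cannot be improved by further dilation --- maximality at a pair persists; (iii) for each pair a dilation maximal at that pair is obtained as a countable direct limit. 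One then exhausts all pairs $(a,\xi)$ with $\xi$ ranging over the successively enlarged spaces and takes one more limit. It is this bounded, monotone scalar invariant --- not the dimension --- that makes the induction halt. I would suggest replacing your second paragraph with this machinery; your first and third paragraphs can then be kept essentially as written.
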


We now make a crucial connection between measures and the unique extension property.

\begin{thm} \label{thm:uep-iff-boundary-measure}
Let $K$ be a compact convex subset of a locally convex vector space, and let $\mu \in M^+(K)$ be a measure with corresponding GNS repre\-sen\-ta\-tion $\pi_\mu : \rC(K) \to \B(L^2(\mu))$. Then $\mu$ is maximal in the dilation order if and only if $\pi_\mu$ has the unique extension property relative to $A(K)$.
\end{thm}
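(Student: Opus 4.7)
The plan is to use Theorem~\ref{thm:equivalence-choquet-dilation-order} together with Proposition~\ref{prop:non-symmetric-dilation} to pass between the Choquet-theoretic condition that $\mu$ is a boundary measure and the representation-theoretic unique extension property for $\pi_\mu$. Both directions analyze the unital completely positive (ucp) extensions of $\pi_\mu|_{A(K)}$ to $\rC(K)$ at the level of their vector states.

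For the ``only if'' direction, suppose $\mu$ is not a boundary measure, so there is $\nu \in M^+(K)$ with $\mu \prec_c \nu$ and $\nu \neq \mu$. By Theorem~\ref{thm:equivalence-choquet-dilation-order}, $\mu \prec_d \nu$, and Proposition~\ref{prop:non-symmetric-dilation} yields a representation $(\sigma, L, \eta)$ of $\nu$ together with an isometry $V : L^2(\mu) \to L$ satisfying $V 1_\mu = \eta$ and $\pi_\mu(a) = V^* \sigma(a) V$ for every $a \in A(K)$. The compression $\Phi_0(f) := V^* \sigma(f) V$ is then a ucp map $\rC(K) \to \B(L^2(\mu))$ extending $\pi_\mu|_{A(K)}$, but $\langle \Phi_0(f) 1_\mu, 1_\mu\rangle = \nu(f) \neq \mu(f) = \langle \pi_\mu(f) 1_\mu, 1_\mu\rangle$ for some $f \in \rC(K)$. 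Hence $\Phi_0 \neq \pi_\mu$, and $\pi_\mu$ fails to have the unique extension property.

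For the ``if'' direction, suppose $\mu$ is a boundary measure and let $\Phi_0 : \rC(K) \to \B(L^2(\mu))$ be any ucp extension of $\pi_\mu|_{A(K)}$. By Stinespring's theorem, write $\Phi_0(f) = V^* \sigma(f) V$ for a $*$-representation $\sigma : \rC(K) \to \B(L)$ and an isometry $V : L^2(\mu) \to L$. For each bounded $\xi \in L^2(\mu)$, the same $V$ exhibits $(\pi_\mu, L^2(\mu), \xi)$ (a representation of the measure $\mu_\xi := |\xi|^2 \mu$) as dilated by $(\sigma, L, V\xi)$ (a representation of $\nu_\xi(f) := \langle \Phi_0(f) \xi, \xi\rangle$), so $\mu_\xi \prec_d \nu_\xi$ and hence $\mu_\xi \prec_c \nu_\xi$ by Theorem~\ref{thm:equivalence-choquet-dilation-order}. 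The main step is to verify that $\mu_\xi$ is itself a boundary measure, which I will deduce from the elementary observation that any positive summand of a boundary measure is a boundary measure: if $\lambda = \lambda_1 + \lambda_2$ is maximal and $\lambda_1 \prec_c \lambda_1'$, then additivity of the Choquet order gives $\lambda \prec_c \lambda_1' + \lambda_2$, so maximality forces $\lambda_1' + \lambda_2 = \lambda$ and hence $\lambda_1' = \lambda_1$. Applied to the decomposition $C\mu = |\xi|^2 \mu + (C - |\xi|^2)\mu$ with $C \geq \|\, |\xi|^2 \,\|_\infty$, this exhibits $\mu_\xi$ as a positive summand of the boundary measure $C\mu$, so $\mu_\xi$ is boundary. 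Maximality of $\mu_\xi$ combined with $\mu_\xi \prec_c \nu_\xi$ then forces $\nu_\xi = \mu_\xi$, giving $\langle \Phi_0(f) \xi, \xi\rangle = \langle \pi_\mu(f) \xi, \xi\rangle$ for every bounded $\xi$ and every $f \in \rC(K)$; polarization and the density of $L^\infty(\mu)$ in $L^2(\mu)$ then yield $\Phi_0 = \pi_\mu$, establishing UEP. The main obstacle is isolating this boundary-preservation step for $\mu_\xi$, but once identified it is straightforward from the additivity of $\prec_c$.
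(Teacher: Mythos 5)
Your proof is correct, and its main direction (boundary measure implies unique extension property) takes a genuinely different route from the paper's. The paper proves that implication through the maximality machinery of Dritschel--McCullough and Arveson: it produces a maximal dilation $\phi$ of $\pi_\mu|_{A(K)}$ via Proposition~\ref{P:maximal-dilations-exist}, uses maximality of $\mu$ in the dilation order to show the induced measure equals $\mu$, concludes that $\pi_\mu|_{A(K)}$ is a direct summand of $\phi$ and hence itself maximal, and then invokes Proposition~\ref{prop:unique-extn-iff-maximal}. You instead test an arbitrary unital completely positive extension $\Phi_0$ against all vector states at bounded vectors: the Stinespring compression exhibits $\mu_\xi = |\xi|^2\mu \prec_d \nu_\xi$, Theorem~\ref{thm:equivalence-choquet-dilation-order} upgrades this to $\mu_\xi \prec_c \nu_\xi$, and your hereditary lemma --- that a positive summand of a boundary measure is a boundary measure, which you prove correctly from additivity of $\prec_c$ and which you apply via the decomposition $C\mu = |\xi|^2\mu + (C-|\xi|^2)\mu$ --- forces $\nu_\xi = \mu_\xi$; polarization and density of $L^\infty(\mu)$ in $L^2(\mu)$ then give $\Phi_0 = \pi_\mu$. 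This bypasses Propositions~\ref{prop:unique-extn-iff-maximal} and~\ref{P:maximal-dilations-exist} entirely and is more elementary and self-contained, though the summand trick is tied to the commutative $L^2(\mu)$ picture and would not transfer as readily to the noncommutative setting that the paper's maximality framework is built for. Your other direction --- compressing a nontrivial dilation of $(\pi_\mu, L^2(\mu), 1_\mu)$ obtained from Proposition~\ref{prop:non-symmetric-dilation} to produce a second completely positive extension --- is essentially the contrapositive of the paper's argument, phrased without the language of maximal maps. One cosmetic point: your labels ``only if'' and ``if'' are swapped relative to the statement, but both implications are in fact established.
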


\begin{proof}
Suppose that $\mu$ is maximal in the dilation order. We must show that if $\psi : \rC(K) \to \B(L^2(\mu))$ is a (completely) positive extension of the restriction $\pi_\mu|_{A(K)}$, then $\psi = \pi_\mu$.

By Proposition \ref{prop:unique-extn-iff-maximal}, it suffices to show that $\pi_\mu|_{A(K)}$ is maximal in the sense of Definition \ref{defn:maximal-map}. 
By Proposition~\ref{P:maximal-dilations-exist}, there is a maximal unital completely positive map $\phi : A(K) \to \B(L)$ on a Hilbert space $L$ that dilates $\pi_\mu|_{A(K)}$. 
Thus there is an isometry $V : H \to L$ such that $\pi_\mu(a) = V^* \phi(a) V$ for all $a \in A(K)$. 
By Proposition \ref{prop:unique-extn-iff-maximal}, $\phi$ extends to a $*$-repre\-sen\-ta\-tion $\sigma : \rC(K) \to \B(L)$.

Let $\xi = V1_\mu$, and define $\nu \in P(K)$ by
\[
\nu(f) = \ip{\sigma(f) \xi, \xi} \qfor f \in \rC(K).
\]
Then $(\sigma, L, \xi)$ is a repre\-sen\-ta\-tion of $\nu$. Moreover,
\[
\pi_\mu(a) = V^*\phi(a)V = V^*\sigma(a)V \qforal a \in A(K).
\]
Hence the GNS repre\-sen\-ta\-tion $(\pi_\mu, L^2(\mu), 1_\mu)$ is dilated by $(\sigma, L, \xi)$, implying $\mu \prec_d \nu$. By the maximality of $\mu$ it follows that $\mu = \nu$.

By the remarks in Section \ref{sec:comm-c-star-alg}, the restriction of $\sigma$ to the cyclic invariant subspace generated by $\xi$ is unitarily equivalent to $\pi_\mu$. In other words, the restriction $\pi_\mu|_{A(K)}$ is unitarily equivalent to a summand of $\phi$. Since $\phi$ is maximal, it follows that $\pi_\mu|_{A(K)}$ is necessarily maximal.

Conversely, suppose that the restriction $\pi_\mu|_{A(K)}$ has the unique extension property. Then by Proposition \ref{prop:unique-extn-iff-maximal}, $\pi_\mu|_{A(K)}$ is maximal in the sense of Definition \ref{defn:maximal-map}.

Let $\nu \in P(K)$ be a probability measure such that $\mu \prec_d \nu$. Then by Proposition \ref{prop:non-symmetric-dilation}, there is a repre\-sen\-ta\-tion $(\sigma, L, \xi)$ of $\nu$ that dilates the GNS repre\-sen\-ta\-tion $(\pi_\mu, L^2(\mu), 1_\mu)$. Let $V : L^2(\mu) \to L$ be an isometry implementing this dilation, i.e. such that $\pi_\mu(a) = V^* \sigma(a) V$ for every $a \in A(K)$ and $V1_\mu = \xi$.

The restriction $\pi_\mu|_{A(K)}$ is dilated by the restriction $\sigma|_{A(K)}$. Hence, by the maximality of $\pi_\mu|_{A(K)}$, the subspace $VH$ is invariant for $\sigma$. It follows that the restriction of $\sigma$ to $VH$ is unitarily equivalent to $\pi_\mu$ via a unitary that maps $1_\mu$ to $\xi$. Therefore, $\mu = \nu$ and $\mu$ is maximal in the dilation order. 
\end{proof}

\begin{cor}
Let $K$ be a compact convex subset of a locally convex vector space, and let $\mu \in M^+(K)$.
Then there is $\nu\in M^+(K)$ with $\mu \prec_d \nu$ which is maximal in the dilation order.
\end{cor}

\begin{proof}
Since $\pi_\mu$ is a $*$-representation of $\rC(K)$, the restriction $\pi_\mu|_{A(K)}$ is completely positive.
By Proposition~\ref{P:maximal-dilations-exist}, it has a maximal dilation $\psi$.
So by Proposition~\ref{prop:unique-extn-iff-maximal}, $\psi = \pi_\nu|_{A(K)}$ is the restriction of a $*$-representation 
with the unique extension property with respect to $A(K)$. In particular, $\mu \prec_d \nu$.
By Theorem~\ref{thm:uep-iff-boundary-measure}, $\nu$ is maximal in the dilation order.
\end{proof}

Let $A$ be a concrete function system that generates a commutative C*-algebra $\rC(X)$. Theorem \ref{thm:function-systems} implies that there is an order isomorphism $\iota : A \to A(K)$, where $A(K)$ denotes the function system of continuous affine functions on the state space $K = S(A)$ of $A$. Furthermore, there is a $*$-homomorphism $q : \rC(K) \to \rC(X)$ such that $\iota \circ q$ is the identity on $A$. Consider the adjoint $q : \rC(X)^* \to \rC(K)^*$. As in Section \ref{sec:choquet-boundary}, identifying $X$ and $K$ with the corresponding point evaluations, $q^*$ maps $X$ into $K$ and $q^*(\partial_A X) = \partial K$.
 
More generally, every $*$-representation of $\rC(X)$ gives rise to a $*$-representation of $\rC(K)$ via composition with $q$, and Theorem \ref{thm:uep-iff-boundary-measure} completely characterizes representations of $\rC(K)$ with the unique extension property relative to $A(K)$. While it is not immediately clear that this can be used to characterize $*$-representations of $\rC(X)$ with the unique extension property relative to $A$, the next result shows that this is indeed the case.

From the operator algebraic viewpoint, the result reduces to the fact that the maximality of a unital completely positive map is an intrinsic property of the operator system, and does not depend on the C*-algebra that it generates.

\begin{thm} \label{thm:translation-uep}
Let $A$ be a concrete function system that generates a commutative C*-algebra $\rC(X)$. Let $K = S(A)$ denote the state space of $A$, let $\iota : A \to A(K)$ denote the canonical order isomorphism, and let $q : \rC(K) \to \rC(X)$ denote the canonical quotient map as in Theorem $\ref{thm:function-systems}$. A $*$-representation $\pi : \rC(X) \to \B(H)$ has the unique extension property relative to $A$ if and only if the $*$-representation $\pi \circ q : \rC(K) \to \B(H)$ has the unique extension property relative to $A(K)$.
\end{thm}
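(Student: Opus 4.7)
The plan is to reduce both UEP statements to the same intrinsic maximality condition on a unital completely positive map, via Proposition~\ref{prop:unique-extn-iff-maximal}, and then transport this condition across the order isomorphism $\iota:A \to A(K)$ provided by Theorem~\ref{thm:function-systems}. The underlying principle is that maximality of a ucp map, as formulated in Definition~\ref{defn:maximal-map}, is a property of the operator system alone and does not depend on any enveloping C*-algebra.

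First I would verify that Proposition~\ref{prop:unique-extn-iff-maximal} is applicable on both sides. On the $A$ side this is immediate since $A$ generates $\rC(X)$ by hypothesis. On the $A(K)$ side one must check that $A(K)$ generates $\rC(K)$, which follows because $A(K)$ contains the constants and separates points of $K$ by Hahn-Banach, whence $A(K)$ is dense in $\rC(K)$ by Stone-Weierstrass. Thus $\pi$ has the UEP relative to $A$ if and only if $\pi|_A$ is maximal as a ucp map on $A$, and $\pi \circ q$ has the UEP relative to $A(K)$ if and only if $(\pi \circ q)|_{A(K)}$ is maximal as a ucp map on $A(K)$.

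Next I would transport maximality through $\iota$. The key identity is
\[ (\pi \circ q)|_{A(K)} \circ \iota \;=\; \pi \circ q \circ \iota \;=\; \pi|_A, \]
using that $q \circ \iota$ is the identity on $A$. Since we work with complete function systems, $\iota$ is a surjective order isomorphism onto $A(K)$, and unital positive maps between function systems are automatically ucp by Stinespring. Consequently, a ucp dilation $\psi : A \to \B(L)$ of $\pi|_A$ with implementing isometry $V$ corresponds bijectively to the ucp dilation $\psi \circ \iota^{-1} : A(K) \to \B(L)$ of $(\pi \circ q)|_{A(K)}$ with the same isometry, and conversely. Because $\psi(A) = (\psi \circ \iota^{-1})(A(K))$ as sets of operators in $\B(L)$, the condition that $VH$ be reducing is identical for both maps, and so triviality of dilations is preserved in both directions. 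Hence $\pi|_A$ is maximal if and only if $(\pi \circ q)|_{A(K)}$ is maximal, which combined with the first step yields the equivalence.

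There is no substantive obstacle; once Proposition~\ref{prop:unique-extn-iff-maximal} is invoked, the result reduces to the elementary observation that maximality of a ucp map is intrinsic to the operator system and is faithfully transported by an order isomorphism. The main point worth emphasizing is that the UEP is therefore insensitive to the particular commutative C*-algebra that the function system happens to generate, which is precisely what is needed to push the analysis of Theorem~\ref{thm:uep-iff-boundary-measure} (stated for $A(K) \subset \rC(K)$) over to an arbitrary function system $A \subset \rC(X)$.
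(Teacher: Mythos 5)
Your proof is correct and follows essentially the same route as the paper: both reduce the unique extension property to maximality via Proposition~\ref{prop:unique-extn-iff-maximal} and transport dilations across $\iota$ using $q\circ\iota=\mathrm{id}_A$, the only difference being that the paper handles the direction from $A(K)$ down to $A$ by a direct composition-with-$q$ argument while you run the same dilation transport symmetrically in both directions. (Two harmless slips worth fixing: it is the C*-algebra generated by $A(K)$, not $A(K)$ itself, that is dense in $\rC(K)$, and the complete positivity of $\iota$ and $\iota^{-1}$ follows from positivity of maps \emph{into} a commutative C*-algebra rather than from Stinespring's result for maps out of one.)
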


\begin{proof}
It is clear that if $\pi \circ q$ has the unique extension property relative to $A(K)$, then $\pi$ has the unique extension property relative to $A$. For the converse, suppose that $\pi \circ q$ does not have the unique extension property relative to $A(K)$. Then by Proposition \ref{prop:unique-extn-iff-maximal}, the restriction $\pi \circ q|_{A(K)}$ is not maximal. Thus there is unital completely positive map $\phi : A(K) \to B(L)$ and an isometry $V : H \to L$ such that $VH$ is not invariant for $\phi(A(K))$ and $\pi \circ q (b) = V^*\phi (b) V$ for all $a \in A(K)$.

The order isomorphism $\iota$ satisfies $q \circ \iota(a) = a$ for all $a \in A$. Thus, defining $\psi : A \to B(L)$ by $\psi(a) = \phi \circ \iota(a)$ for $a \in A$, it follows from above that $VH$ is not invariant for $\psi(A)$ and $\pi(a) = V^* \psi(a) V$ for all $a \in A$. In other words, $\psi$ is a non-trivial dilation of the restriction $\pi|_A$. Therefore, by Proposition \ref{prop:unique-extn-iff-maximal}, $\pi$ does not have the unique extension property relative to $A$.
\end{proof}

The next result follows immediately from Theorem \ref{thm:translation-uep} and Theorem \ref{thm:uep-iff-boundary-measure}.

\begin{cor} \label{cor:translation-uep-pushforward}
Let $A$ be a concrete function system that generates a commutative C*-algebra $\rC(X)$, and let $\mu \in M^+(X)$ be a measure with corresponding GNS repre\-sen\-ta\-tion $\pi_\mu : \rC(X) \to \B(L^2(\mu))$. Let $K = S(A)$ denote the state space of $A$, let $\iota : A \to A(K)$ denote the canonical order isomorphism, and let $q : \rC(K) \to \rC(X)$ denote the canonical quotient map as in Theorem~$\ref{thm:function-systems}$. Then $\pi_\mu$ has the unique extension property relative to $A$ if and only if the pushforward measure $\mu \circ (q^*)^{-1} \in M^+(K)$ is maximal in the dilation order.
\end{cor}

\section{Hyperrigidity of function systems} \label{sec:hyperrigidity}

Motivated both by the fundamental role of the classical Choquet boundary in classical approximation theory, and by the importance of approximation in the contemporary theory of operator algebras, Arveson \cite{Arv2011} introduced hyperrigidity as a form of approximation that captures many important operator-algebraic phenomena.

\begin{defn}[Arveson]
A concrete operator system $A$ that generates a  C*-algebra $C$ is {\em hyperrigid} if whenever $\pi : C \to \B(H)$ is a nondegenerate $*$-representation and $\phi_n : C \to \B(H)$ is a sequence of unital completely positive maps with the property that
\[
 \lim_n \|\phi_n(a) - \pi(a)\| = 0 \qforal a \in A,
 \]
then
\[
 \lim_n \|\phi_n(c) - \pi(c)\| = 0 \qforal c \in C.
\]
\end{defn}

This definition is very useful once it is established, but there are equivalent formulations that are easier to verify.

\begin{thm}[Arveson] \label{T:Arv_hyper}
Let $A$ be a concrete operator system that generates a C*-algebra $C$. Then $A$ is hyperrigid if and only if every $*$-repre\-sen\-ta\-tion $\pi : C \to \B(H)$ has the unique extension property relative to $A$.
\end{thm}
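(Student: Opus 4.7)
The plan is to prove the two directions separately. The easy direction is that hyperrigidity implies every nondegenerate $*$-representation has UEP; the hard direction is the converse, and it reduces the sequential statement of hyperrigidity to the single-map statement of UEP via an ultrapower construction.

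For the easy direction, assume $A$ is hyperrigid. Given a nondegenerate $*$-representation $\pi : C \to \B(H)$ and any UCP extension $\sigma : C \to \B(H)$ of $\pi|_A$, apply hyperrigidity to the constant sequence $\phi_n = \sigma$: the hypothesis $\|\phi_n(a) - \pi(a)\| = 0$ holds trivially on $A$, and the conclusion yields $\sigma(c) = \pi(c)$ for every $c \in C$. Hence $\pi$ has UEP relative to $A$.

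For the hard direction, assume every nondegenerate $*$-representation of $C$ has UEP relative to $A$. Fix a nondegenerate $*$-representation $\pi$ on $H$ and UCP maps $\phi_n : C \to \B(H)$ with $\|\phi_n(a) - \pi(a)\| \to 0$ for every $a \in A$. Suppose, for contradiction, that $\|\phi_n(c) - \pi(c)\| \not\to 0$ for some $c \in C$. After passing to a subsequence, assume $\|\phi_n(c) - \pi(c)\| \geq \delta > 0$, and choose unit vectors $\xi_n \in H$ with $\|(\phi_n(c) - \pi(c))\xi_n\| \geq \delta/2$. Fix a nonprincipal ultrafilter $\mathcal{U}$ on $\mathbb{N}$ and form the Hilbert-space ultrapower $H^\mathcal{U} = \ell^\infty(\mathbb{N}, H) / \{(x_n) : \lim_\mathcal{U} \|x_n\| = 0\}$. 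Define $\tilde{\pi}(c)[(x_n)] = [(\pi(c)x_n)]$, which is a nondegenerate $*$-representation of $C$ on $H^\mathcal{U}$, and $\tilde{\phi}(c)[(x_n)] = [(\phi_n(c)x_n)]$. Standard verifications (positivity on matrix amplifications reduces to positivity of ultralimits of inner products; unitality and boundedness are immediate from $\|\phi_n(c)\| \leq \|c\|$) show that $\tilde{\phi}$ is UCP. On $A$, the estimate $\|(\phi_n(a) - \pi(a))x_n\| \leq \|\phi_n(a) - \pi(a)\|\,\|x_n\| \to 0$ forces $\tilde{\phi}(a) = \tilde{\pi}(a)$, so the UEP hypothesis applied to the $*$-representation $\tilde{\pi}$ on $H^\mathcal{U}$ yields $\tilde{\phi} = \tilde{\pi}$ throughout $C$. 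Evaluating at the unit vector $[(\xi_n)] \in H^\mathcal{U}$ then gives
\[
\lim_\mathcal{U} \|(\phi_n(c) - \pi(c))\xi_n\| = \|\tilde{\phi}(c)[(\xi_n)] - \tilde{\pi}(c)[(\xi_n)]\| = 0,
\]
contradicting the lower bound $\delta/2$.

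The main obstacle is conceptual rather than computational. UEP compares a single UCP map with a single $*$-representation, whereas hyperrigidity constrains a whole sequence of UCP maps at once. The ultrapower packages the sequence $\{\phi_n\}$ into one UCP map $\tilde{\phi}$ on a larger Hilbert space, converting asymptotic agreement on $A$ into exact agreement and the hypothesized failure of convergence on $C$ into an honest inequality, at which point a single invocation of UEP closes the argument. An alternative route would replace the ultrafilter by a direct point-weak-operator compactness argument on the unit ball of UCP maps, but the ultrapower formulation encapsulates that compactness more transparently.
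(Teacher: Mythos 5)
Your proof is correct. Note, however, that the paper does not prove this theorem at all: it is stated as a citation of Arveson's Theorem 2.1 in \cite{Arv2011}, so the only meaningful comparison is with Arveson's original argument. Your easy direction (apply hyperrigidity to the constant sequence $\phi_n=\sigma$) is exactly the standard one. For the hard direction, Arveson packages the sequence by passing to the quotient C*-algebra $\ell^\infty(\mathbb{N},\B(H))/c_0(\mathbb{N},\B(H))$, represented faithfully on some Hilbert space: the map $c\mapsto(\phi_n(c))_n+c_0$ is unital completely positive, the map $c\mapsto(\pi(c))_n+c_0$ is a $*$-representation, they agree on $A$ because $\|\phi_n(a)-\pi(a)\|\to 0$, and the unique extension property forces them to agree on $C$, which says precisely that $\limsup_n\|\phi_n(c)-\pi(c)\|=0$ --- no subsequence or contradiction needed. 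Your ultrapower $H^{\mathcal U}$ is the same device in Hilbert-space form (the canonical map $\ell^\infty(\mathbb{N},\B(H))\to\B(H^{\mathcal U})$ is a unital $*$-homomorphism, which is also the cleanest way to justify that your $\tilde\phi$ is completely positive, rather than checking matrix positivity by hand); the extra cost is the ultrafilter and the contradiction setup, the benefit is that you stay inside $\B(K)$ for an explicit Hilbert space $K$ rather than invoking a faithful representation of an abstract quotient. One small point of hygiene: the unique extension property as defined in the paper applies to \emph{unital} completely positive maps, so the theorem implicitly concerns nondegenerate (equivalently unital) representations, as you correctly assume; this is worth saying explicitly since the statement as printed says ``every $*$-representation.''
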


It follows immediately from Definition \ref{defn:boundary-representation} and Theorem \ref{T:Arv_hyper} that a necessary condition for a concrete operator system to be hyperrigid is that every irreducible representation of the C*-algebra it generates is a boundary representation. Thus, in this case $\partial_A X = X$. Arveson conjectured \cite{Arv2011}*{Conjecture 4.3} that this is the only obstruction to hyperrigidity.

\begin{conj}[Arveson] \label{conj:hyperrigidity}
Let $A$ be a concrete operator system that generates a separable C*-algebra $C$. Then $A$ is hyperrigid if and only if every irreducible repre\-sen\-ta\-tion of $C$ is a boundary repre\-sen\-ta\-tion of $A$.
\end{conj}

A much more general problem than Conjecture \ref{conj:hyperrigidity} is to characterize the $*$-representations of a C*-algebra with the unique extension property relative to an operator system that generates it. Using the results of the previous section, we will provide a reformulation of this problem for the case of a function system.

\begin{question}
If $A$ is a concrete function system generating a separable commutative C*-algebra $\rC(X)$ such that $\partial_A X = X$, is every measure which is Choquet maximal (i.e. a boundary measure) also maximal in the dilation order? 
\end{question}

Note that a negative answer  yields a counterexample while a positive answer establishes the hyperrigidity conjecture in the  commutative case. 

More generally, if $\mu$ is a boundary measure for $A(K)$, we ask whether it is maximal in the dilation order.
If this has a positive answer, this would provide a precise analogue of Arveson's conjecture in the case when $\partial K$ is not closed.

It may be the case that separability is essential. 
Arveson's paper \cite{Arv2008} contains some discussion concerning the non-separable case, and why there may be more pathology there.

\section{Dilation versus Choquet order} \label{sec:dilation-vs-choquet}

It is unfortunately not true that the Choquet order and dilation order coincide. In this section we will show that the following example provides a counterexample.

\begin{example}\label{inequivalence}
Let $K$ be the convex hull of $\{e_1,e_2,e_3\}$ in $\bR^3$.
Let 
\[ x_i = (e_i + e_1 + e_2 + e_3)/4 \qand y=(e_1+e_2+e_3)/3. \]
Set $X = \{x_1,x_2,x_3\}$ and $Y = \{e_1,e_2,e_3,y\}$.
Set 
\[
 \mu = \frac13 \sum_{i=1}^3 \delta_{x_i} \qand  \nu_t = t \delta_y + \frac{1-t}3  \sum_{i=1}^3 \delta_{e_i} \qfor 0 \le t \le 1. 
\]
Then $\mu$ and each $\nu_t$ have barycenter $y$.

\begin{thm}
 $\mu \prec_c \nu_t$ if and only if $0 \le t \le \frac34$; but $\mu \prec_d \nu_t$ if and only if $0 \le t \le 0.9$.
\end{thm}

Let $A = A(a_1,a_2,a_3)$ denote the affine function such that $A(e_i)=a_i$.
Let $F \in C(K)$ satisfy $F(e_i)=F(x_i)=0$, $F(y)=1$ and $0 \le F \le 1$.

Then $H_\mu = L^2(\mu) = \bC^3$. Note that $C(X) = A(K)|_X$. 
Since we have $\pi_\mu(f) = \diag(f(x_1), f(x_2), f(x_3))$, it is easy to see that 
\[
 \pi_\mu(A(a,b,c)) = \diag\Big( \frac{2a+b+c}4, \frac{a+2b+c}4,\frac{a+b+2c}4 \Big) .
\]
Let $1_\mu = (1,1,1)$ and use the inner product $\ip{\xi,\eta} = \frac13 \sum_{i=1}^3 \xi_i \ol{\eta_i}$.

\begin{lem}
$\mu \prec_c \nu_t$ if and only if $0 \le t \le \frac34$.
\end{lem}

\begin{proof}
Since $\mu$ and $\nu_t$ have the same barycenter, they agree on affine functions.
So if $f \in P(K)$ is not affine, we may subtract an affine function so that $f(e_i) = 0$ for $1 \le i \le 3$, and scale it so that $f(y) = -1$.
Then by convexity, we have that
\[
f(x_i) = f( \tfrac14 e_i + \tfrac34 y) \le \tfrac14 f(e_i) + \tfrac34 f(y) = - \frac34 .
\]
Hence $\mu(f) \le - \frac34$. Moreover this is an equality for the function $f_0$ which is linear on the segments $[y,e_i]$.
However $\nu_t(f) = t f(y) = -t$.
Thus $\mu \prec_c \nu_t$ if and only if $- \frac34 \le -t$.
\end{proof}

\begin{lem}
$\mu \prec_d \nu_t$ if and only if $0 \le t \le 0.9$.
\end{lem}

\begin{proof}
We want to define a positive map $\Phi_t : C(K) \to \B(H_\mu)$ which factors through the quotient of $C(K)$ onto $C(Y)$
which agrees with $\pi_\mu$ on $A(K)$ and satisfies
\[ \ip{\Phi(f) 1_\mu, 1_\mu} = \nu_t(f)   \qfor f \in C(K) .\]
Observe that $C(Y)$ is spanned by $A(K)|_Y$ and $F$. 
As $\mu$ and $\nu_t$ have the same barycentre, they agree on $A(K)$.
Hence it suffices to arrange that
\[ t = \ip{\Phi(F) 1_\mu, 1_\mu} = \frac13 \sum_{i,j = 1}^3 \Phi(F)_{ij}.\]

Observe that in $C(Y)$, 
\begin{enumerate}
\item $A(a_1,a_2,a_3) +F \ge 0$ if and only if $a_i\ge 0$, and 
\item $A(a_1,a_2,a_3) - F \ge 0$ if and only if $a_i\ge0$ and $a_1+a_2+a_3 \ge 3$.
\end{enumerate}
So we require that
\begin{enumerate}
\item $\Phi_t(F) \ge 0$.
\item $\Phi_t(F)$ is less than $\pi_\mu(A(3,0,0)) = \diag(\tfrac32, \tfrac34, \tfrac34)$, and similarly less than $\diag(\tfrac34, \tfrac32, \tfrac34)$ and $\diag(\tfrac34, \tfrac34, \tfrac32)$.
\item $\sum_{i,j=1}^3 \Phi_t(F)_{ij} = 3t$.
\end{enumerate}

For $t\le 3/4$, we may take $\Phi_t(F) = t I_3$. This is the best that can be done mapping into the diagonal. 
However we can do better by setting 
\[ \Phi_t(F) = \frac t3  \begin{bmatrix}1&1&1\\1&1&1\\1&1&1\end{bmatrix} . \]
This evidently satisfies (1) and (3) for any $t \ge 0$.
By symmetry, to verify (2), it is enough to show that 
\[ 0 \le \begin{bmatrix}\frac32 - \frac t3&-\frac t3&-\frac t3\\-\frac t3&\frac34 - \frac t3&-\frac t3\\-\frac t3&-\frac t3&\frac34 - \frac t3\end{bmatrix} .\]
This has a positive diagonal and positive $2 \times 2$ minors when $t\le 1$. 
So we can detect when this ceases to be strictly positive by finding when it has kernel.
Gaussian elimination shows that this occurs when $t=.9$. 
\end{proof}

Therefore for $.75 < t \le .9$, we have 
\[
 \mu \prec_d \nu_t \quad\text{but}\quad \mu \not\prec_c \nu_t .
\]
This establishes that the two orders are distinct.
\medbreak

Let $f\in C(K)$. A familiar tool in Choquet theory is to define the upper concave envelope by
\[ \bar f = \inf \{ a \in A(K) : f \le a \} .\]
Now if we are interested in the maps $\Phi_t$, we need only consider $f_0 := f|_Y$.
That is, 
\[ \bar f_0 := \inf \{ \bar f : f|_Y = f_0 \} = \inf \{ a \in A(K) : f_0 \le a|_Y \} .\]

\begin{lem}
Let $f_0(e_i) = \alpha_i$ and $f_0(y) = \beta$. Set $\bar\alpha := \frac13 (\alpha_1 + \alpha_2 + \alpha_3)$. Then
\[ 
\bar f_0(x_i) = 
\begin{cases}
\frac{\alpha_i + \alpha_1 + \alpha_2 + \alpha_3}4 &\qif \beta \le \bar\alpha \\
\frac{\alpha_i + 3 \beta}4 &\qif \beta \ge \bar\alpha\end{cases}.
\]
\end{lem}

\begin{proof}
Observe that $A(a_1,a_2,a_3)|_Y \ge f_0$ provided that $a_i \ge \alpha_i$ and $ \frac13 (a_1 + a_2 + a_3) \ge \beta$.
So if $\beta \le  \bar\alpha$, then $\bar f_0 = A(\alpha_1,\alpha_2,\alpha_3)$.
Otherwise, one can use the extreme points of the minimal affine functions dominating $f_0$ to get that $\bar f_0$ equals
\[
 \min\!\big\{ \!
 A(\alpha_1,\alpha_2,3\beta \!-\! \alpha_1 \!-\! \alpha_2), \!
 A(\alpha_1,3\beta \!-\! \alpha_1 \!-\! \alpha_3,\alpha_3), \!
 A(3\beta \!-\! \alpha_2 \!-\! \alpha_3,\alpha_2,\alpha_3) 
 \! \big\} .
\]
In particular, in this case,
\[
 \bar f_0(x_i) 
 = \min\Big\{ \frac{\alpha_i + 3\beta}4, \frac{3\beta + \alpha_i + 3(\beta -  \bar\alpha)}4 \Big\}
 = \frac{\alpha_i + 3\beta}4.
\]
\end{proof}

\begin{cor}
For all $f\in C(K)$, $\Phi_t(f) \le \pi_\mu(\bar f)$ for $0 \le t \le .75$, but not for $.75 < t \le .9$.
\end{cor}

\begin{proof}
We have that 
\[
 \Phi_t(f) = \pi_\mu(A(\alpha_1,\alpha_2,\alpha_3)) + (\beta - \bar\alpha) tP
\]
where $P = \frac13 \begin{bmatrix}1&1&1\\1&1&1\\1&1&1\end{bmatrix}$.
As $\Phi_t$ depends only on $f_0=f|_Y$, we use the Lemma to get
\[
\pi_\mu(\bar f_0) = 
\begin{cases}
\pi_\mu(A(\alpha_1,\alpha_2,\alpha_3))&\ \text{if } \beta \le \bar\alpha \\
\diag\Big( \frac{\alpha_i + 3 \beta}4 \Big) = \pi_\mu(A(\alpha_1,\alpha_2,\alpha_3)) + \frac34(\beta-\bar\alpha) I_3&\ \text{if } \beta \ge \bar\alpha
\end{cases}.
\]
Subtracting, we obtain that
\[
\pi_\mu(\bar f_0) - \Phi_t(f) =
\begin{cases}
(\bar\alpha-\beta) tP &\qif \beta \le \bar\alpha \\
(\beta-\bar\alpha) (\frac34 I_3 - tP)&\qif \beta \ge \bar\alpha
\end{cases}.
\]
The first option is always positive, but the second is positive only if $tP \le \frac34 I_3$, which happens only if $t\le .75$.
\end{proof}
 
\end{example}


\end{document}